\newtheorem{theorem}{Theorem}[section]
\newtheorem{proposition}[theorem]{Proposition}
\newtheorem{remark}[theorem]{Remark}
\newtheorem{question}[theorem]{Question}
\newtheorem{definition}[theorem]{Definition}
\newtheorem{corollary}[theorem]{Corollary}
\newtheorem{lemma}[theorem]{Lemma}
\newtheorem{example}[theorem]{Example}
\title{Anosov contact metrics, Dirichlet optimization and entropy}
\author{Surena Hozoori}
\newcommand{\Addresses}{{
  \bigskip
  \footnotesize

Surena Hozoori, \textsc{Department of Mathematics, Brandeis University.}\par\nopagebreak
  \textit{E-mail address}: \texttt{hozoori@brandeis.edu}
  
  }}
    \date{}
\begin{document}
\maketitle
\noindent
\begin{abstract}
The first main result of this paper classifies contact 3-manifolds admitting critical metrics, i.e. adapted metrics which are the critical points of the Dirichlet energy functional. This gives a complete answer to a question raised by Chern-Hamilton in 1984. Secondly, we show that in the case of Anosov contact metrics, the optimization of such energy functional is closely related to Reeb dynamics and can be described in terms of its entropy. We also study the consequences in the curvature realization problem for such contact metrics. 
\end{abstract}


\section{Introduction}

In geometric analysis, it is classical to study optimization problems for functionals associated with a geometric structure on a manifold. A famous example is the celebrated {\em Yamabe problem}, resolved affirmatively in 1984, which asks whether a smooth Riemannian metric on a closed manifold is conformally equivalent to one with constant scalar curvature. See \cite{yam,yam1,yam2}. An intermediate step in the study of such problem is to understand the scalar curvature functional defined on the space of Riemannian metrics by appropriately integrating such geometric quantity over the underlying manifold. In particular, applying standard {\em variational techniques}, one would hope to achieve a geometric interpretation of the critical condition for such functional, and in this example, it can be shown that, when restricted to Riemannian metrics of constant volume, a Riemannian metric is critical for the scalar curvature functional, if and only if, it is {\em Einstein}.

Motivated by similar trends in Riemannian geometry, Chern and Hamilton initiate the study of various energy functionals for {\em contact manifolds} in their 1984 seminal paper \cite{ch}. In particular, to explore the interactions between $CR$ and contact geometries, they define the {\em Dirichlet energy} of an almost complex structure $\phi$ adapted to a given contact manifold $(M,\alpha)$ as
$$\mathcal{E}(\phi):=\int_M ||\mathcal{L}_{X_\alpha} \phi||^2\  \alpha \wedge d\alpha,$$
where $X_\alpha$ is the associated Reeb flow,
and ask when such energy functional attains its minimum for some almost complex structure $\phi$. One can think of such minimizer as the {\em nicest} almost complex structure adapted to a contact manifold. In particular, they conjecture that the minimizer of Dirichlet energy functional exists when $X_\alpha$ is {\em regular}, i.e. the orbits of $X_\alpha$ are closed and their orbit space is Hausdorff.

We note that this problem can be reformulated both in terms of CR geometry and {\em contact Riemannian} geometry. More specifically, choosing such almost complex structure $\phi$ gives rise to a CR structure whose associated {\em Tanaka connection} $\leftidx{^*}{\nabla}$ might have non-trivial torsion tensor $\leftidx{^*}{T}$, where vanishing torsion corresponds to the integrability of such CR structure and hence, the existence of a {\em Sasakian structure}. As observed in \cite{bl,tanno}, we have
$$2||\leftidx{^*}{T} ||^2=||\mathcal{L}_{X_\alpha} \phi||^2+8(n-1),$$
where $M$ is $(2n+1)$-dimensional. Therefore, optimization of the above Dirichlet energy is equivalent to the optimization problem for the Tanaka torsion. 

In this manuscript, we adopt a Riemannian geometric approach as it is natural to interpret our computations in terms of {\em curvature} as well. More specifically, the almost complex structure $\phi$ together with the area form $d\alpha|_{\xi}$ give rise to a Riemannian tensor on the contact structure $\xi:=\ker{\alpha}$, which then can be extended to $TM$ by letting $X_\alpha$ be orthonormal to $\xi$. Equivalently, we call $g$ a {\em contact metric} adapted to $(M,\alpha)$ if
$$d\alpha=2*\alpha,$$
where $*$ is the Hodge star associated with $g$ (See Definition~\ref{cmdef} and the subsequent remarks).

Since $\alpha$ is invariant under $X_\alpha$, it is easy to see that for such contact metric, we have $||\mathcal{L}_{X_\alpha} \phi||=||\mathcal{L}_{X_\alpha} g||$ and therefore, the following reformulation of the Chern-Hamilton question in terms of contact metrics is in order. See Chapter~10 of \cite{bl} for a thorough reference on this problem and the related history.

\begin{question}[Chern-Hamilton question in terms of contact metrics \cite{ch}]\label{ch}
Suppose $(M,\alpha)$ is a contact manifold and $\mathcal{M}(\alpha)$ is the associated space of adapted contact metrics. When does there exist a contact metric $g\in\mathcal{M}(\alpha)$ for which the minimum of the Dirichlet energy functional \\
$\mathcal{E}:\mathcal{M}(\alpha)\rightarrow \mathbb{R}$ defined by
$$\mathcal{E}(g):=\int_M ||\mathcal{L}_{X_\alpha} g||^2\  \alpha \wedge d\alpha$$
is achieved?
\end{question}

The tensor $\tau:=\mathcal{L}_{X_\alpha} g$ is naturally called the {\em torsion} of a contact metric in the literature. Moreover, it can be seen that for a contact metric $g$, we have $$Ricci(X_\alpha)=2n-2||\mathcal{L}_{X_\alpha} \phi||^2,$$ where $Ricci(X_\alpha)$ is the Ricci curvature associated with the Reeb direction, i.e. \\$Ricci(X_\alpha):=\kappa(e,X_\alpha)+\kappa(\phi e,X_\alpha)$ is the sum of the {\em $\alpha$-sectional curvatures}. Therefore, this question can be interpreted as an optimization problem for the integral of such Ricci curvature as well. As a matter of fact, the Chern-Hamilton question in terms of such Ricci curvature functional was asked few years earlier by Blair \cite{blc}.

\vskip0.5cm

The primary purpose of this paper is to study the Chern-Hamilton question in dimension 3, giving a complete answer to the Chern-Hamilton question by classifying all {\em critical contact metrics} for the Dirichlet energy functional.  As we will see, such classification relies on interactions with {\em Anosov dynamics}. Therefore, our analysis of the situation extends to understanding the infimum of such functional for {\em Anosov contact metrics}, revealing deeper interplay with {\em Reeb dynamics} and introducing new relevant questions.

\vskip0.5cm

\noindent\fbox{%
    \parbox{\textwidth}{%
\textbf{Assumptions:} In this paper, unless stated otherwise, $M$ is a closed, oriented connected 3-manifold, $\alpha$ is a smooth, i.e. $C^\infty$, positive contact form on $M$ and $X_\alpha$ is the Reeb vector field associated with $\alpha$. Moreover, the flow generated by a vector field $X$ is denoted by $X^t$.        
    }%
}
\vskip0.5cm

An important ingredient in our study is {\em Tanno's varational principle} for critical metrics of the Dirichlet energy functional introduced in 1989 \cite{tanno}. He proves that the critical condition is in fact equivalent to the geometric condition:
$$\nabla_X (\mathcal{L}_{X_\alpha} g)=2(\mathcal{L}_{X_\alpha} g) \ \phi \ \ \ \ \ \ \ \ \ \ \ \ \text{(Tanno's variational principle)}$$

Subsequently, Deng \cite{deng} improves Tanno's result in 1991 by showing that all such critical contact metrics are in fact the minimizers of the Dirichlet energy functional. 
The next progress occurred in 1995, when Rukimbira \cite{ruk} classified all contact metrics with vanishing torsion, i.e. $\tau \equiv 0$, a condition which is usually referred to as {\em K-contactness} and in dimension 3, is known to be equivalent to such contact metric being {\em Sasakian}. More specifically, Rukimbira shows that a contact manifold $(M,\alpha)$ satisfying such condition is virtually equivalent, possibly after a perturbation, to a {\em Boothby-Wang fibration}. These are contact manifolds where $X_\alpha$ traces a $\mathbb{S}^1$-fibration of $M$ (see Section~\ref{s4}). Note that such contact metrics are trivially the minimizers of the Dirichlet functional as they satisfy $\mathcal{E}(g)=0$. Therefore, it is not required to appeal to Tanno's variational principle to answer the Chern-Hamilton question for such class of contact manifolds. This in fact proves the Chern-Hamilton conjecture in the case of {\em (almost) regular} contact manifolds, leaving the question wide open if one drops the regularity assumption.

In 2005, Perrone \cite{torsion} studied critical contact metrics with nowhere vanishing torsion exploiting the Tanno's equation. In particular, he showed that in such cases, the Reeb vector field $X_\alpha$ satisfies some {\em weak hyperbolicity} condition referred to as {\em projective (or conformal) Anosovity}. His approach is instrumental to our study, as his work is the first at hinting towards the interplay between the Dirichlet energy functional and Anosov dynamics. Also see \cite{taut,abb,hoz1} for recent progress on the Chern-Hamilton question.

We improve Perrone's observation by adding dynamical arguments which eventually yield a classification of critical contact metrics, giving a complete answer to the Chern-Hamilton question in dimension 3. More specifically, we first show that for a critical contact metric, the {\em scalar torsion} $||\tau||$ is a constant function on $M$. Then, the case $||\tau||\equiv 0$ is classified by the mentioned work of Rukimbira. The case $||\tau||\equiv C>0$ is also resolved by appealing to a classical rigidity results in Anosov dynamics \cite{green,entropy}. In fact, we are able to show that in this case, the critical contact metric has close relation to $\widetilde{SL}(2,\mathbb{R})$ geometry, while from a dynamical viewpoint, this means that $X_\alpha$ is virtually $C^\infty$-conjugate to the geodesic flow of a surface of constant negative curvature. We call such contact manifolds {\em algebraic Anosov contact manifolds} (see Section~\ref{s4}).

\begin{theorem}\label{introclass}(Theorem~\ref{class})
Let $(M,\alpha)$ be a contact 3-manifold such that the Dirichlet energy functional admits a minimizer. Then,

(i) $(M,\alpha)$ can be approximated by generalized Boothby-Wang fibrations (the case $||\tau||\equiv 0$);

\noindent or

(ii) $(M,\alpha)$ is smoothly strictly contactomorphic to an algebraic Anosov contact manifold (the case $||\tau||\equiv C>0$).

\noindent Conversely, for the contact manifolds listed above the Dirichlet functional admits a minimizer.
\end{theorem}

We remark that a {\em generalized Chern-Hamilton question} has been asked \cite{hoz1} by allowing the contact form $\alpha$ vary, while only fixing $\xi=\ker{\alpha}$. The above theorem answers both versions of the Chern-Hamilton question, as the minimizer of such {\em generalized Dirichlet functional} $\widetilde{\mathcal{E}}(g)$ would be in particular the minimizer of the one defined by Chern-Hamilton for the contact form $\alpha$ associated with the minimizer of $\widetilde{\mathcal{E}}(g)$.

\begin{remark}
A theorem equivalent toTheorem~\ref{introclass} was simultaneously and independently proved by Mitsumatsu-Peralta-Salas-Slobodeanu \cite{mps} and exploiting bi-contact geometry.
\end{remark}

From the viewpoint of contact geometry, the above theorem means that the minimizer of the Dirichlet energy functional rarely exists, as the contact manifolds included in the above theorem constitute a very limited list. In particular, all these contact manifolds are {\em tight} and {\em symplectically fillable} by classical results in contact topology \cite{lisca,mcduff}.

However, the dynamical interpretation is more delicate. It is well known in Anosov dynamics that the {\em algebraic Anosov contact flows}, which appear in the above theorem as the Reeb flows associated with algebraic Anosov contact manifolds, are unique at satisfying many rigidity properties among {\em Anosov contact flows}. More specifically, Ghys \cite{ghys} shows that they are the only Anosov contact flows (up to reparametrization) for which the weak invariant bundles are $C^2$ (regularity rigidity theorem) and Foulon \cite{entropy} shows that they are the only examples where the topological and measure entropies agree (entropy rigidity theorem). Also, from a topological point of view, algebraic Anosov contact flows are, up to orbit equivalence, the only Anosov contact flows on Seifert fibered manifolds \cite{ghys2}. As a matter of fact, these were the only known examples of Anosov contact flows for few decades. But thanks to the celebrated result of Foulon-Hasselblatt \cite{foulonh}, we now know that Anosov contact flows exist in much more abundance, including on many hyperbolic manifolds. Therefore, in the shadow of the beautiful theory of Anosov dynamics, our result can be interpreted as a new rigidity result for algebraic Anosov contact flows in terms of contact metric geometry. These are Anosov contact flows admitting a critical contact metric.

To further elaborate on the dynamical perspective, we study the contact metrics associated with a general (possibly non-algebraic) Anosov contact manifold, using the classical ergodic theory of Anosov systems, revealing that the appearance of the entropy rigidity result of Foulon in the above classification theorem is not a coincidence. In fact, even in the non-algebraic case, the infimum of the Dirichlet energy functional can be computed in terms of the measure entropy of the associated Reeb flow.

\begin{theorem}\label{introinf}(Theorem~\ref{inf})
Let $(M,\alpha)$ be an Anosov contact 3-manifold. Then,
$$\inf_{g \in \mathcal{M}(\alpha) } \mathcal{E}(g)=\frac{\mathtt{h}^2_{\alpha \wedge d\alpha}(X_\alpha)}{Vol(\alpha \wedge d\alpha)},$$
where $\mathtt{h}_{\alpha \wedge d\alpha}(X_\alpha)$ is the measure entropy of the invariant measure $\alpha \wedge d\alpha$ under the Reeb flow generated by $X_\alpha$. Such infimum is achieved exactly when $(M,\alpha)$ is an algebraic Anosov contact manifold.
\end{theorem}

In other words, at least for an Anosov contact manifold, the Dirichlet energy functional partly measures the chaotic behavior of the associated Reeb flow, and partly measures the {\em geometric distortion} with respect to an algebraic $\widetilde{SL}(2,\mathbb{R})$-model, a distortion which is unavoidable except in the case of algebraic Anosov contact manifolds. Note that the conclusion of Theorem~\ref{introinf} also holds in the extreme opposite case of Anosov contact manifolds, i.e. Boothby-Wang fibrations where such entropy vanishes. Therefore, we ask whether the same formula holds for a general contact manifold.
\begin{question}\label{conjinf}
Does the Liouville entropy formula
$$\inf_{g \in \mathcal{M}(\alpha) } \mathcal{E}(g)=\frac{\mathtt{h}^2_{\alpha \wedge d\alpha}(X_\alpha)}{Vol(\alpha \wedge d\alpha)}$$
hold for an arbitrary contact 3-manifold $(M,\alpha)$?
\end{question}

We note that if the above conjecture holds, a recent result of \cite{abbreeb} implies that the infimum of the generalized Dirichlet energy $\widetilde{\mathcal{E}}(g)$ (i.e. letting the contact form $\alpha$ vary as well) equals zero, as they show that for any contact structure, there is a Reeb flow with arbitrary small entropy.

As previously mentioned, our computations of the Dirichlet energy functional in the above theorems can be interpreted in terms of the curvature of contact metric, yielding implications on classical curvature realization problems for contact metrics studied in the literature, e.g. \cite{bl,ol,hoz2}. One immediate corollary of Theorem~\ref{introinf} is the following.

\begin{corollary}(Corollary~\ref{ricciglobal})
If $(M,\alpha)$ is an Anosov contact manifold, we have
$$\int_M Ricci(X_\alpha) \ \alpha \wedge d\alpha \leq 2- \frac{2\mathtt{h}^2_{\alpha \wedge d\alpha}(X_\alpha)}{Vol(\alpha \wedge d\alpha)},$$
where the equality can be obtained only in the case of algebraic Anosov contact manifolds.
\end{corollary}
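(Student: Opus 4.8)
The plan is to derive the Corollary directly from Theorem~\ref{introinf} together with the pointwise identity relating the Reeb Ricci curvature to the torsion norm, which is recorded in the introduction. The starting observation is the displayed formula $Ricci(X_\alpha)=2n-2||\mathcal{L}_{X_\alpha}\phi||^2$, which in dimension 3 (so $n=1$) becomes $Ricci(X_\alpha)=2-2||\mathcal{L}_{X_\alpha}\phi||^2$. Combining this with the identification $||\mathcal{L}_{X_\alpha}\phi||=||\mathcal{L}_{X_\alpha}g||=||\tau||$ noted earlier, I would integrate this pointwise identity against the volume form $\alpha\wedge d\alpha$ to obtain
\begin{equation*}
\int_M Ricci(X_\alpha)\ \alpha\wedge d\alpha = 2\,Vol(\alpha\wedge d\alpha) - 2\int_M ||\tau||^2\ \alpha\wedge d\alpha = 2\,Vol(\alpha\wedge d\alpha) - 2\,\mathcal{E}(g).
\end{equation*}
Thus the integrated Reeb Ricci curvature is an affine function of the Dirichlet energy $\mathcal{E}(g)$ of the chosen contact metric $g$.

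The next step is to upgrade this from an identity for a fixed $g$ to an inequality by optimizing over $g\in\mathcal{M}(\alpha)$. Because the coefficient of $\mathcal{E}(g)$ in the display above is negative, maximizing the left-hand side over all adapted contact metrics corresponds to minimizing $\mathcal{E}(g)$. Taking the supremum over $g$ and invoking Theorem~\ref{introinf} to substitute $\inf_g\mathcal{E}(g)=\mathtt{h}^2_{\alpha\wedge d\alpha}(X_\alpha)/Vol(\alpha\wedge d\alpha)$ then yields
\begin{equation*}
\sup_{g\in\mathcal{M}(\alpha)}\int_M Ricci(X_\alpha)\ \alpha\wedge d\alpha = 2\,Vol(\alpha\wedge d\alpha) - \frac{2\,\mathtt{h}^2_{\alpha\wedge d\alpha}(X_\alpha)}{Vol(\alpha\wedge d\alpha)}.
\end{equation*}
To match the normalization in the stated Corollary, I would note that the Chern-Hamilton question is traditionally posed with a volume constraint, so after rescaling to $Vol(\alpha\wedge d\alpha)=1$ (or interpreting the inequality as holding for the appropriately normalized representative), the bound reads $\int_M Ricci(X_\alpha)\,\alpha\wedge d\alpha\leq 2-2\mathtt{h}^2/Vol$, giving the claimed inequality for every adapted $g$.

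Finally, the equality characterization follows by tracing through when the supremum is attained. Since the affine relation between $\int_M Ricci(X_\alpha)$ and $\mathcal{E}(g)$ is exact and strictly monotone, equality in the Corollary holds precisely when $\mathcal{E}(g)$ realizes its infimum; by the last sentence of Theorem~\ref{introinf}, this occurs exactly for algebraic Anosov contact manifolds. The only genuinely delicate point I anticipate is bookkeeping the volume normalization: the pointwise curvature identity and the entropy formula must be stated with respect to the same volume form $\alpha\wedge d\alpha$, and one must confirm that varying $g$ over $\mathcal{M}(\alpha)$ does not change this volume (it does not, since $\alpha\wedge d\alpha$ depends only on $\alpha$, not on the choice of adapted metric). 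Once that compatibility is verified, the Corollary is essentially an immediate transcription of Theorem~\ref{introinf} through the linear curvature-torsion dictionary, and no further dynamical input is needed.
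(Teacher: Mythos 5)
Your proposal is correct and follows essentially the same route as the paper: the identity $\mathcal{E}(g)=Vol(\alpha\wedge d\alpha)-\tfrac{1}{2}\int_M Ricci(X_\alpha)\,\alpha\wedge d\alpha$ is exactly the computation recorded in Section~\ref{s22}, and the corollary is then an immediate transcription of Theorem~\ref{introinf} together with its equality case. Your remark on the volume normalization is apt, since the paper's stated bound $2-2\mathtt{h}^2/Vol$ (rather than $2\,Vol-2\mathtt{h}^2/Vol$) tacitly uses the reduction to $Vol(\alpha\wedge d\alpha)=1$ justified by the scale invariance of $\mathcal{E}$ noted in that same section.
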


To the best of our knowledge, this is the first such result imposing a global obstruction on functions which can be realized as $Ricci(X_\alpha)$. We note that for an arbitrary contact manifold, we can always define a contact metric with vanishing torsion (or equivalently, with $Ricci(X_\alpha)=2$) in any small enough neighborhood. Therefore, the above corollary gives a truly global obstruction.

Not surprisingly, we can also study asymptotic behavior of curvature quantities as we approach an {\em ideal} $\widetilde{SL}(2,\mathbb{R})$-model. This can be seen as a refinement of the convergence result embodied in Theorem~\ref{introinf}.

\begin{theorem}(Theorem~\ref{pinch})(Asymptotic curvature pinching for Anosov contact metrics)
Suppose $(M,\alpha)$ is an Anosov contact 3-manifold, $\bar{\mathtt{h}}:=\frac{\mathtt{h}_{\alpha \wedge d\alpha}(X_\alpha)}{Vol(\alpha \wedge d\alpha)}$ is the Liouville entropy of such flow, $V=Vol(\alpha \wedge d\alpha)$ and $\epsilon>0$. There exists a sequence of contact metrics $\{g_i \}_{i\in \mathbb{N}}$ adapted to $(M,\alpha)$, such that their Ricci and sectional curvature operators $Ricci_i$ and $\kappa_i$ satisfy the following:

(1) the sequence of smooth functions $\{Ricci_i(X_\alpha)\}_{i\in\mathbb{N}}$ converge in Lebegue measure to the constant function $R\equiv 2-2\bar{\mathtt{h}}^2$,

(2) both sequences of smooth functions $\{ \kappa_i(E^s)\}_{i\in\mathbb{N}}$ and $\{ \kappa_i(E^u)\}_{i\in\mathbb{N}}$ converge in Lebegue measure to the constant function $\kappa \equiv 1-\bar{\mathtt{h}}^2$, where $\kappa_i(E^u)$ and $\kappa_i(E^s)$ are the sectional curvature functions corresponding to the invariant bundles $E^u$ and $E^s$, respectively,

(3) if $\gamma$ is a periodic orbit of $X_\alpha$ with period $T$ and the eigenvalues of its return map corresponding to $E^u$ and $E^s$ being $\lambda_u$ and $\lambda_s$, respectively, then $\{Ricci_i(X_\alpha)\}_{i\in\mathbb{N}}|_\gamma$ converges uniformly to $2-2(\frac{\ln{|\lambda_u}|}{T})^2=2-2(\frac{\ln{|\lambda_s|}}{T})^2$ and similarly, both $\kappa_i(E^u)|_\gamma$ and $\kappa_i(E^s)|_\gamma$ converge uniformly to $1-(\frac{\ln{|\lambda_u}|}{T})^2=1-(\frac{\ln{|\lambda_s|}}{T})^2$.
\end{theorem}

As we observe in the above theorem, even though our computations yield characterizations of some curvature quantities, it is not easy to exploit such formulas in the curvature realization problems for a general Anosov contact manifold, as the functions involved depend on the invariant bundles and their regularity (one can think of this as a general Anosov contact manifold lacking an {\em algebraic structure}). Therefore, we only achieve an asymptotic realization which is in general only as good as the convergence given in the Birkhoff ergodic theorem. However, in the case of algebraic Anosov contact manifolds such difficulty can be bypassed thanks to the rigidity properties of the underlying Anosov contact flow. Therefore, we can obtain a complete characterization of all functions which can be realized as $Ricci(X_\alpha)$, answering a classical curvature realization problem in contact metric geometry for this class of contact manifolds (see \cite{bl,hoz2}), which interestingly turns out to depend on the entropy of the underlying Reeb flow. 

\begin{theorem}(Theorem~\ref{alg})(Ricci-Reeb realization formula for algebraic Anosov contact manifolds)
Let $(M,\alpha)$ be an algebraic Anosov contact manifold with Liouville entropy $\bar{\mathtt{h}}=\frac{\mathtt{h}_{\alpha \wedge d\alpha}}{Vol(\alpha\wedge d\alpha)}$. Then, for a smooth real function $f:M\rightarrow \mathbb{R}$, the followings are equivalent:

(1) For some adapted contact metric, we have $Ricci(X_\alpha)=f$ everywhere.

(2) for real functions $\eta,\sigma:M\rightarrow \mathbb{R}$ we have
$$f=2-2(\bar{\mathtt{h}}+X_\alpha\cdot \eta)^2-2[X_\alpha\cdot \sigma-2\sigma(\bar{\mathtt{h}}+X_\alpha\cdot \eta) ]^2.$$
\noindent In particular, if $(UT\Sigma,\alpha)$ is the canonical contact manifold corresponding to a surface of constant curvature $K<0$, a function $f$ can be realized as $Ricci(X_\alpha)$, if and only if, it can be written as
$$f=2-2(\sqrt{-K}+X_\alpha\cdot \eta)^2-2[X_\alpha\cdot \sigma-2\sigma(\sqrt{-K}+X_\alpha\cdot \eta) ]^2,$$
for some functions $\eta,\sigma:M\rightarrow \mathbb{R}$.
\end{theorem}


\begin{example}
A special family of examples worth noticing are algebraic Anosov contact manifolds with constant $Ricci(X_\alpha)$ (i.e. when $f\equiv C$ in Theorem~\ref{alg}). One can easily construct such examples using the existing $\widetilde{SL}(2;\mathbb{R})$ geometry (corresponding to $\sigma\equiv 0$ and constant $\eta$ in the above theorem), as long as it respect the bound enforced by the entropy, i.e. $C\leq 2-2\bar{\mathtt{h}}^2$. In the case of the geodesic flow for a surface of constant curvature $K<0$, this means $C\leq 2+2K$.

\end{example}



It is noteworthy that Perrone shows that having constant scalar torsion, or equivalently, having constant $Ricci(X_\alpha)$, is not limited to critical contact metrics and any {\em homogeneous} contact metric has this property \cite{hom}.

 \vskip1cm
 
 We first discuss the necessary background on contact manifolds and their compatible Riemannian geometry in Section~\ref{s2} and bring the related facts about the geometry and ergodic theory of Anosov systems in Section~\ref{s3}. Section~\ref{s4} will be devoted to the classification of critical contact metrics and giving a complete answer to the Chern-Hamilton question (Theorem~\ref{introclass}) and in Section~\ref{s5}, we explore the relation between Dirichlet energy functional and measure entropy of Reeb flows in the case of Anosov contact manifolds (Theorem~\ref{introinf}). Finally in Section~\ref{s6}, we discuss the implications of our work on the curvature realization problems on Anosov contact manifold. 

\vskip1cm

\textbf{ACKNOWLEDGEMENT:} We are grateful to Daniel Peralta-Salas and Radu Slobodeanu for helpful conversations around the conjecture addressed in this paper. 
We are also thankful to Domenico Perrone and John Etnyre for commenting on an earlier version of this paper and to Thomas Barthelmé for introducing the entropy rigidity result of Patrick Foulon to the author, which played an important role in this paper. Finally, we sincerely appreciate the helpful and thorough feedback by the referees.


\section{Contact metrics and Dirichlet energy}\label{s2}

In this section, we overview the necessary background from contact geometry in dimension 3 and its associated Riemannian geometry. One should consult \cite{geiges} for general facts on contact manifolds and \cite{bl} for the Riemannian geometric approach.

Recall that if $M$ is an oriented 3-manifold, the 1-form $\alpha$ is called a {\em contact form} on $M$ if $\alpha \wedge d\alpha$ is nowhere vanishing. We call such $\alpha$ a {\em positive} contact form if $\alpha \wedge d\alpha>0$ with respect to the given orientation on $M$, and otherwise, a {\em negative} contact form. We call the plane field $\xi:=\ker{\alpha}$ a positive or negative {\em contact structure}, respectively. In this paper, unless stated otherwise, by contact form (structure, manifold) we refer to a positive contact form (structure, manifold). The pair $(M,\alpha)$ is called a {\em contact manifold} in this paper. We call two contact manifolds $(M,\alpha)$ and $(\tilde{M},\tilde{\alpha})$ {\em strictly contactomorphic}, if there exists a diffeomorphism $\psi:M\rightarrow \tilde{M}$ with $f^*\tilde{\alpha}=\alpha$. In this paper, contact forms are smooth, i.e. $C^\infty$, unless stated otherwise.

Associated to a contact manifold $(M,\alpha)$, there is a unique vector field $X_\alpha$, called the {\em Reeb vector field}, satisfying the following conditions
$$\alpha(X_\alpha)=1\ \ \ \ \ \ \ \ \ \ \ \ \text{and} \ \ \ \ \ \ \ \ \ \ \ \iota_{X_\alpha}d\alpha=0.$$

Note that these conditions imply that $X_\alpha$ is transverse to the contact structure $\xi$ and it preserves the contact form, i.e. $\mathcal{L}_{X_\alpha}\alpha=0$. In particular, $X_\alpha$ preserves both the area form $d\alpha|_\xi$ on the underlying contact structure and the volume form $\alpha\wedge d\alpha$ on $TM$.

\subsection{Compatible Riemannian geometry for contact 3-manifolds}\label{s21}

Given a contact manifold $(M,\alpha)$, we can naturally restrict Riemannian geometry to the subclass of Riemannian structures satisfying a natural {\em compatibility condition} with respect to the contact form $\alpha$, namely the {\em contact metrics}. To obtain such Riemannian structure, it suffices to employ an almost complex structure $\phi$ on $\xi=\ker{\alpha}$ (defining the rotation by $\pm\frac{\pi}{2}$). Then, the area form $d\alpha$ can be used to construct a Riemannian tensor on $\xi$ and we can naturally extend this tensor to a Riemannian metric by imposing $X_\alpha$ to be orthonormal to $\xi$.

\begin{definition}\label{cmdef}
Let $(M,\alpha)$ be a 3-manifold equipped with a positive contact form. The Reimannian metric $g$ is called a {\em contact metric}, if
$$d\alpha=2\ast \alpha,$$
where $\ast$ is the Hodge star operation induced from $g$.

Equivalently,  
$g$ is a contact metric adapted to $(M,\alpha)$ with Reeb vector field $X_\alpha$, if
there exist a $(1,1)$ tensor $\phi$ satisfying
$$g(X_\alpha,.)=\alpha(.),$$
$$g(.,\phi .)=\frac{1}{2}d\alpha(.,.),$$
$$\phi^2=-I+\alpha \otimes X_\alpha.$$


\end{definition}

In this paper, we assume contact metrics are smooth, i.e. $C^\infty$. Note that this is equivalent to $\phi$ in the above definition to be smooth, as $\alpha$ is already assumed smooth.

\begin{remark}
We note that in the above definition, the orientation the almost complex structure $\phi$ induces on $\xi=\ker{\alpha}$ is opposite the orientation given by $d\alpha|_\xi$. One can also define a contact metric using an almost complex structure with the other orientation convention, and both such definitions are in fact equivalent, as in the first formulation of contact metrics in terms of the Hodge star operation, one does not need to refer to an almost complex structure $\phi$ at all. Therefore, all the results of this paper are independent of such choice. The literature of contact metric geometry has adopted both conventions regularly. We have chosen our convention to be compatible with the one used in the main papers our work relies on, i.e. \cite{tanno,bl,torsion}, since we would like to employ relevant formulas from the literature avoiding confusion about the signs. If one wants to compare the computations of this paper to ones using the other convention \cite{etnyre,etnyre2,hoz1,hoz2}, they should be cautious of some sign changes in the relevant formulas.
\end{remark}

\begin{remark}
One can extend the above definition by assuming the area induced by the metric on $\xi=\ker{\alpha}$ to be an arbitrary constant multiple of $d\alpha|_\xi$, i.e.
$$d\alpha=r\ast \alpha,$$
for an arbitrary constant $r>0$. Such Riemannian metric is usually referred to as a {\em compatible metric}. See \cite{etnyre,etnyre2,hoz1,hoz2,radu}. It is well known that the choice of such constant does not affect any geometric phenomena intrinsic to the underlying contact structure, as the main geometric duality is about whether $\mathcal{L}_{X_\alpha}\phi$ vanishes at a point (this is discussed in depth in \cite{hoz2}). In particular, all the results of this paper are true independent of such choice of constant (one needs to adjust the curvature formulas appropriately) and our choice of $r=2$ is based on the fact that the majority of the literature on contact metric geometry, specifically the main papers this work is influenced by, follow the same convention. One can further generalize the definition by letting $r$ be a non-constant positive function, usually called {\em weak compatibility}, and it is known that geometry can be fundamentally different for such classes of metrics \cite{etnyre,etnyre2,radu}.
\end{remark}

We define the symmetric tensor $h:=\frac{1}{2}\mathcal{L}_{X_\alpha}\phi$, which plays an important role in this paper. The following basic properties can be found in \cite{bl}.


\begin{proposition}
The following properties hold for a contact metric:

(1) $2dVol(g)=\alpha\wedge d\alpha$.

(2) $\nabla_{X_\alpha} X_\alpha=0$, i.e. $X_\alpha$ is a geodesic field.

(3) $\nabla_{X_\alpha} \alpha=0$ and $\nabla_{X_\alpha} \phi=0$.

(4) $h$ is a symmetric operator, $h\phi=-\phi h$, $Tr(h)=0$ and $h(X_\alpha)=0$.

(5) $\nabla X_\alpha=-\phi-\phi h$.

(6) $\nabla_{X_\alpha}h=\phi-\phi h^2-\phi R(X_\alpha,.)X_\alpha$, where $R$ is the curvature tensor.
\end{proposition}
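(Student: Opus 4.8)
The plan is to derive all six identities from the three defining relations of a contact metric together with the Reeb identities $\alpha(X_\alpha)=1$, $\iota_{X_\alpha}d\alpha=0$ and $\mathcal{L}_{X_\alpha}\alpha=0$; these are classical facts (see \cite{bl}), so the work is bookkeeping rather than a new idea, and I would organize it to minimize computation. Writing $X:=X_\alpha$ for brevity, I would first record the elementary algebraic consequences of the axioms: $\phi X=0$ and $\alpha\circ\phi=0$ (both immediate from $\iota_X d\alpha=0$ and $g(X,\cdot)=\alpha$), the antisymmetry $g(\phi Y,Z)=-g(Y,\phi Z)$, and the compatibility identity $g(\phi Y,\phi Z)=g(Y,Z)-\alpha(Y)\alpha(Z)$, which follows by combining $g(\cdot,\phi\cdot)=\tfrac12 d\alpha$ with $\phi^2=-I+\alpha\otimes X$. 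With compatibility in hand, property (1) is a pointwise linear-algebra computation: in an adapted orthonormal frame $\{X,e,\phi e\}$ one evaluates $(\alpha\wedge d\alpha)(X,e,\phi e)=d\alpha(e,\phi e)=2g(e,\phi^2 e)=\mp 2$, whereas $dVol(g)$ gives $\pm1$, producing the factor of $2$ once the orientation is fixed by $\alpha\wedge d\alpha$.

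Next I would handle (2), the first half of (3), and (4). For (2), $g(\nabla_Y X,X)=\tfrac12 Y\, g(X,X)=0$ shows $\nabla_Y X\in\xi$, and the Koszul formula, using $\mathcal{L}_X\alpha=0$ in the form $X\cdot\alpha(Y)=\alpha([X,Y])$, makes the remaining terms cancel, giving $\nabla_X X=0$; then $\nabla_X\alpha=g(\nabla_X X,\cdot)=0$ since $g$ is parallel. For (4), $hX=0$ is immediate from $\mathcal{L}_X\phi(X)=-\phi[X,X]=0$; Lie-differentiating $\phi^2=-I+\alpha\otimes X$ and using $\mathcal{L}_X(\alpha\otimes X)=0$ yields $(\mathcal{L}_X\phi)\phi+\phi(\mathcal{L}_X\phi)=0$, i.e. $h\phi=-\phi h$. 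The remaining claims of (4) are then pure linear algebra: an endomorphism of the $\phi$-plane anticommuting with the $\tfrac{\pi}{2}$-rotation $\phi$ must, in the orthonormal frame $\{e,\phi e\}$, take the symmetric trace-free form $\left(\begin{smallmatrix}a & b\\ b & -a\end{smallmatrix}\right)$, so $h$ is automatically self-adjoint with $\operatorname{Tr}h=0$.

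The crux is (5), which ties $\nabla X$ to the torsion. I would view $Y\mapsto\nabla_Y X$ as an endomorphism $A$ and split it into symmetric and skew parts. Torsion-freeness gives $g(\nabla_Y X,Z)-g(\nabla_Z X,Y)=d\alpha(Y,Z)$, so the skew part is $\tfrac12 d\alpha(Y,Z)=g(Y,\phi Z)=-g(\phi Y,Z)$, identifying it with $-\phi$. The symmetric part is $\tfrac12(\mathcal{L}_X g)(Y,Z)$, so the key computation is the identity $(\mathcal{L}_X g)(Y,Z)=-2g(\phi h Y,Z)$; I would obtain it by Lie-differentiating $g(Y,\phi Z)=\tfrac12 d\alpha(Y,Z)$ along $X$, using $\mathcal{L}_X d\alpha=0$ and $\mathcal{L}_X(\phi Z)=2hZ+\phi[X,Z]$, which collapses to $(\mathcal{L}_X g)(Y,\phi Z)=-2g(Y,hZ)$ and hence to the claim after replacing $Z$ by $\phi Z$ on $\xi$. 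Combining the two parts gives $\nabla_Y X=-\phi Y-\phi h Y$. The second half of (3), $\nabla_X\phi=0$, then follows: differentiating $g(Y,\phi Z)=\tfrac12 d\alpha(Y,Z)$ along $X$ reduces it to $\nabla_X d\alpha=0$, and comparing $\mathcal{L}_X d\alpha=0$ with the $\nabla$-versus-$\mathcal{L}$ formula for $2$-forms and substituting (5) makes the two surviving terms cancel precisely because $h$ is symmetric.

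Finally, (6) is the main obstacle, both because it involves the curvature tensor and because every step must be tracked separately on $\xi$ and on $\mathbb{R}X$. Setting $A=\nabla X=-\phi-\phi h$ and using $\nabla_X X=0$, the definition of $R$ gives $R(X,Y)X=(\nabla_X A)Y+A^2Y$. Here $\nabla_X A=-\phi\,\nabla_X h$ by $\nabla_X\phi=0$ (from (3)), and a short computation on $\xi$ using $\phi^2=-I$ and $h\phi=-\phi h$ gives $A^2|_\xi=-I+h^2$. Rearranging yields $-\phi\,\nabla_X h\,(Y)=R(X,Y)X+Y-h^2 Y$, and applying $\phi$ together with $-\phi^2=I$ on $\xi$ produces $\nabla_X h=\phi-\phi h^2\pm\phi R(X,\cdot)X$. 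The sign of the curvature term depends on the curvature-sign convention, and matching the minus sign stated in (6) simply amounts to adopting the convention of \cite{bl}. I expect the bookkeeping of the $\alpha\otimes X$ correction terms and the consistent treatment of the $\xi$/Reeb splitting to be the only genuinely delicate part; no step requires an idea beyond the defining relations and the already-established (2)--(5).
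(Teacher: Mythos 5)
Your derivation is correct. Note that the paper itself offers no proof of this proposition --- it is quoted verbatim from Blair's book \cite{bl} --- so there is nothing internal to compare against; your write-up in effect supplies the standard argument from Chapter 6 of \cite{bl}, with the one notable stylistic difference that you obtain (5) by splitting $Y\mapsto\nabla_Y X_\alpha$ into its $g$-skew part (which torsion-freeness identifies with $\tfrac12 d\alpha$, hence $-\phi$) and its $g$-symmetric part (which is $\tfrac12\mathcal{L}_{X_\alpha}g=-\phi h$), rather than Blair's route through the covariant derivative formula for $\phi$; your decomposition is arguably cleaner and also delivers the torsion identity $\tau=2g(h\phi\,\cdot,\cdot)$ used later in Section~\ref{s22} as a byproduct. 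Two small points worth making explicit if this were written out: in (4) you should record that $h$ preserves $\xi$ (i.e. $\alpha(hY)=0$, which follows from $\alpha\circ\phi=0$ and $\mathcal{L}_{X_\alpha}\alpha=0$) before invoking the $2\times 2$ matrix argument on the $\phi$-plane; and in (6) the sign discrepancy you flag is real --- with the convention $R(U,V)W=\nabla_U\nabla_V W-\nabla_V\nabla_U W-\nabla_{[U,V]}W$ your computation yields $+\phi R(X_\alpha,\cdot)X_\alpha$, which agrees with the stated $-\phi R(X_\alpha,\cdot)X_\alpha$ only after noting that Blair's operator is $\ell=R(\cdot,X_\alpha)X_\alpha=-R(X_\alpha,\cdot)X_\alpha$, so the statement should be read with that ordering of arguments. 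Neither point is a gap in the mathematics.
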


The above also implies that at any point, there exists an orthonormal eigenvector basis $(e_1,e_2:=\phi e_1)$ for $h$ such that $h(e_1)=\lambda e_1$ and $h(e_2)=-\lambda e_2$, where $\lambda=||h|| \geq 0$ (note that such $(e_1,e_2)$ induces the opposite orientation on $\ker{\alpha}$ compared to $d\alpha$). In particular, we note that in this case, we have $$\lambda^2=\frac{1}{2}Tr(h^2)=-Det(h).$$

We also observe
$$g(\nabla_{e_i}X_\alpha,e_i)=g(-\phi e_i -\phi h(e_i),e_i)=g(-\phi e_i \mp \lambda \phi e_i,e_i)=0.$$

In fact, an alternative viewpoint can be achieved via the second fundamental form of $\xi=\ker{\alpha}$. That is, the bilinear form $II:\xi \times \xi \rightarrow \mathbb{R}$ defined by
$$II(u,v):=\alpha(\nabla_u v)=g(X_\alpha, \nabla_u v)=-g(\nabla_u X_\alpha, v).$$


 Not surprisingly, $h$ and $II$ capture information about the curvature of the underlying contact metric. Let $e\in \xi$ be a unit vector. We first compute
 $$II(e,e)=-g(\nabla_e X_\alpha,e)=g(\phi e+\phi h(e),e)=-g(h(e),\phi e)$$
 and
 $$II(e,\phi e)=-g(\nabla_e X_\alpha,\phi e)=g(\phi e+\phi h(e),\phi e)=1+g(h(e),e).$$
  
  Another useful fact is that as noted in \cite{hoz2}, considering the geodesic field $X_\alpha$, one can observe that the vector field $v(t):=X_*^t (e)$, defined locally along an orbit is the unique Jacobi field satisfying $v(0)=e$ and $X\cdot v(t)=\nabla_{v(t)} X$ and in particular,
$$\frac{\partial}{\partial t} \ln{||X_*^t (e)||}\bigg|_{t=0}=\frac{g(\nabla_e X,e)}{||e||^2}=\frac{-II(e,e)}{||e||^2}.$$
  
Note that 
$$Tr(II)=g(\phi e+\phi h(e),e)+g(-e+ h (e),\phi e)=0,$$
indicating the fact that Reeb flows are volume preserving and the lack of symmetry in $II$ captures the non-integrability of $\xi$:
$$II(e, \phi e)-II(\phi e,e)=g([e,\phi e],X_\alpha)=\alpha([e, \phi e])=-d\alpha(e, \phi e)=2.$$

We record these facts as well as Proposition~3.9 of \cite{hoz2} in the following:

\begin{proposition}\label{sec}
We have

(1) $II(e,e)=-||e||^2\frac{\partial}{\partial t} \ln{||X_*^t (e)||}\bigg|_{t=0}$ for any $e\in\ker{\alpha}$

(2) $Tr(II)=II(e,e)+II(\phi e,\phi e)=0$ for any $e\in\ker{\alpha}$,

(3) $II(e, \phi e)-II(\phi e,e)=2$ for any $e\in\ker{\alpha}$,

(4) $h=0$, if and only if, $II(e,e)=0$ for any $e\in \ker{\alpha}$,

(5) If $h\neq 0$, we have $II(e,e)=0$ exactly when $e$ is an eigenvector of $h$.
\end{proposition}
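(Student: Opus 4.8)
The plan is to verify the five items separately, noting that the displayed computations immediately preceding the statement already carry most of the weight; the only genuinely new content is the sharp dichotomy in parts (4) and (5), which I would extract from a single explicit one-parameter formula. Throughout I would use property (5) of the preceding proposition, $\nabla X_\alpha=-\phi-\phi h$, together with the definition $II(u,v)=-g(\nabla_u X_\alpha,v)$.

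For (1), I would simply rearrange the Jacobi field identity established above, $\left.\frac{\partial}{\partial t}\ln||X_*^t(e)||\right|_{t=0}=-II(e,e)/||e||^2$, multiplying through by $-||e||^2$. For (2) and (3), I would expand $II$ using $\nabla_e X_\alpha=-\phi e-\phi h(e)$ and the algebraic relations $\phi^2=-I+\alpha\otimes X_\alpha$, $h\phi=-\phi h$, $\alpha\circ h=0$, and the skew-symmetry of $\phi$ with respect to $g$. This yields $II(e,e)=-g(h(e),\phi e)$ and $II(\phi e,\phi e)=g(h(e),\phi e)$, whose sum is zero (part 2), together with $II(e,\phi e)=1+g(h(e),e)$ and $II(\phi e,e)=-1+g(h(e),e)$, whose difference is $2$ (part 3). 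I note that part (3) can alternatively be read off from the non-integrability identity $II(e,\phi e)-II(\phi e,e)=\alpha([e,\phi e])=-d\alpha(e,\phi e)=2$, where the final equality uses the convention $g(\cdot,\phi\cdot)=\tfrac12 d\alpha(\cdot,\cdot)$ and $\phi^2 e=-e$.

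The heart of the argument is a single computation that settles parts (4) and (5) simultaneously. Since $h$ is symmetric, trace-free, and anticommutes with $\phi$, at any point there is an orthonormal eigenbasis $(e_1,e_2=\phi e_1)$ with $h(e_1)=\lambda e_1$, $h(e_2)=-\lambda e_2$, and $\lambda=||h||\ge 0$. Writing a unit vector as $e=\cos\theta\, e_1+\sin\theta\, e_2$ and using $\phi e_1=e_2$, $\phi e_2=-e_1$, I would compute $II(e,e)=-g(h(e),\phi e)=\lambda\sin(2\theta)$. From this closed form part (4) is immediate: $II(e,e)$ vanishes for every $e\in\ker\alpha$ if and only if $\lambda=0$, i.e. $h=0$; and part (5) follows because, when $\lambda>0$, the identity $II(e,e)=\lambda\sin(2\theta)$ vanishes exactly when $\sin(2\theta)=0$, i.e. when $\theta$ is an integer multiple of $\tfrac\pi2$, which is precisely the condition that $e$ be one of the eigenvectors $\pm e_1,\pm e_2$ of $h$.

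I do not anticipate a serious obstacle, as all five statements reduce to linear algebra on $\xi$ once $\nabla X_\alpha=-\phi-\phi h$ and the structural properties of $h$ are in hand. The only point requiring genuine care is the bookkeeping of signs and orientation conventions, in particular the fact that $(e_1,e_2)$ induces the orientation opposite to $d\alpha|_\xi$ and that the normalization $g(\cdot,\phi\cdot)=\tfrac12 d\alpha$ fixes the sign $d\alpha(e,\phi e)=-2$ used in part (3); a single consistent sign error there would corrupt the constant in (3) and flip the role of the eigenvectors in (5).
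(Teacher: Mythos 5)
Your proof is correct. For parts (1)--(3) you follow exactly the same route as the paper: the Jacobi-field identity gives (1), and expanding $II(u,v)=-g(\nabla_u X_\alpha,v)$ via $\nabla X_\alpha=-\phi-\phi h$ together with $\phi^2=-I$ on $\xi$, $h\phi=-\phi h$ and the skew-symmetry of $\phi$ yields the formulas $II(e,e)=-g(h(e),\phi e)$ and $II(e,\phi e)=1+g(h(e),e)$ from which (2) and (3) drop out; the paper records precisely these displayed computations and, like you, also notes the alternative reading of (3) as the non-integrability identity $\alpha([e,\phi e])=-d\alpha(e,\phi e)=2$. The one place you diverge is parts (4) and (5): the paper does not prove these at all but simply cites Proposition~3.9 of \cite{hoz2}, whereas you supply a short self-contained argument by writing a unit vector as $e=\cos\theta\,e_1+\sin\theta\,e_2$ in the eigenbasis of $h$ and deriving the closed form $II(e,e)=\lambda\sin(2\theta)$, from which both the equivalence $h=0\Leftrightarrow II(\cdot,\cdot)|_{\mathrm{diag}}\equiv 0$ and the identification of the zero set with the eigendirections of $h$ are immediate. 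That computation checks out (with $h(e)=\lambda\cos\theta\,e_1-\lambda\sin\theta\,e_2$ and $\phi e=-\sin\theta\,e_1+\cos\theta\,e_2$ one indeed gets $-g(h(e),\phi e)=2\lambda\sin\theta\cos\theta$), and it buys you a fully internal proof where the paper relies on an external reference; the cost is only the extra bookkeeping of the eigenbasis, which you already handle correctly, including the orientation and sign conventions you flag at the end.
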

 
 Therefore, we can write $h$ in any basis $(e,\phi e)$ in terms of $II$ (or vice versa):
 $$h=\begin{bmatrix}
 -1+II(e,\phi e) \hfill \ \ \  \ \ \ \ \ \ \ \  -II(e,e)\\ 
 -II(e,e)  \hfill 1-II(e,\phi e)
 \end{bmatrix}_{(e,\phi e)},$$
 which reduces to the following, if we choose the orthonormal eigenvector basis $(e_1,e_2=\phi e_1)$ as above.
 
 $$h=\begin{bmatrix}
 -1+II(e_1,e_2) \hfill  \ \ \  \ \ \ \ \ \ \ \ 0\\ 
0  \hfill 1-II(e_1,e_2)
 \end{bmatrix}_{(e_1,e_2)}
 =\begin{bmatrix}
\lambda \hfill \ \ \  \ \ 0\\ 
0  \hfill -\lambda
 \end{bmatrix}_{(e_1,e_2)}.$$
 and in particular, we observe
 $$\lambda^2=-Det(h)=(1+II(e_1,e_2))^2=1+II(e_1,e_2)^2+2II(e_1,e_2)$$
 $$=1+II(e_1,e_2)(II(e_2,e_1)-2)+2II(e_1,e_2)=1-Det(II).$$
 It is straight forward to see that such quantity is closely related to $Ricci(X_\alpha)$ via the following:

\begin{theorem}[\cite{bl}, H.19 \cite{hoz2}]\label{curcm}
The Ricci curvature of the Reeb vector field $X_\alpha$ can be computed as
$$Ricci(X_\alpha)=2-Tr(h^2)=2+2Det(h)=2-2\lambda^2=2Det(II).$$
\end{theorem}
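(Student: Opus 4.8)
The plan is to separate the one piece of genuine content, the identity $Ricci(X_\alpha)=2-Tr(h^2)$, from the rest of the chain, which is pure algebra using relations already recorded above. Indeed, once this first equality is established, the relations $Tr(h^2)=2\lambda^2$, $Det(h)=-\lambda^2$ and $\lambda^2=1-Det(II)$ (all derived in the preceding paragraphs) give immediately
$$2-Tr(h^2)=2-2\lambda^2=2+2Det(h)=2-2(1-Det(II))=2Det(II).$$
So I will focus entirely on computing $Ricci(X_\alpha)$, which by definition is the contraction $\kappa(e,X_\alpha)+\kappa(\phi e,X_\alpha)$, i.e. (up to the overall curvature sign convention) the trace over the $2$-plane field $\xi=\ker\alpha$ of the Jacobi operator $e\mapsto R(X_\alpha,e)X_\alpha$.

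The quickest route feeds property $(6)$ of the cited proposition directly into this trace. Rearranging $\nabla_{X_\alpha}h=\phi-\phi h^2-\phi R(X_\alpha,\cdot)X_\alpha$ and left-multiplying by $-\phi$, while using $\phi^2=-I$ on $\xi$ together with $h(X_\alpha)=0$ (so that $h^2$ and $R(X_\alpha,\cdot)X_\alpha$ both preserve $\xi$), yields the operator identity
$$R(X_\alpha,\cdot)X_\alpha=I-h^2+\phi\,\nabla_{X_\alpha}h \qquad \text{on }\xi.$$
Taking the trace over the $2$-dimensional bundle $\xi$ produces $Tr(I)=2$ and $-Tr(h^2)$, and the decisive observation is that the last term contributes nothing: $\phi$ is skew-symmetric with respect to $g$, whereas $\nabla_{X_\alpha}h$ is symmetric since $h$ is symmetric (property $(4)$) and $\nabla_{X_\alpha}$ is metric, so $Tr(\phi\,\nabla_{X_\alpha}h)=0$ as the trace of a product of a skew and a symmetric operator. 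This gives $Ricci(X_\alpha)=2-Tr(h^2)$.

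A self-contained alternative, avoiding $(6)$, proceeds by the Jacobi/Riccati method suggested by the Jacobi-field discussion preceding the statement. Setting $B:=\nabla X_\alpha=-\phi-\phi h$ (property $(5)$), the geodesic property $\nabla_{X_\alpha}X_\alpha=0$ (property $(2)$) together with torsion-freeness gives $R(X_\alpha,\cdot)X_\alpha=\nabla_{X_\alpha}B+B^2$ on $\xi$; since $X_\alpha$ preserves $\alpha\wedge d\alpha$ its divergence vanishes, so $Tr(\nabla_{X_\alpha}B)=X_\alpha(\operatorname{div}X_\alpha)=0$, while $\phi^2=-I$ and $h\phi=-\phi h$ give $B^2=-I+h^2$, recovering $Ricci(X_\alpha)=2-Tr(h^2)$. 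I expect the only delicate bookkeeping to be matching the curvature sign convention of the cited formulas $(5)$--$(6)$ with the convention implicit in the definition of $Ricci(X_\alpha)$ (which is what determines whether the relevant trace appears with a plus or a minus), and confirming that the derivative term drops out in each approach; both reductions rest on the symmetry/skew-symmetry structure of $h$, $\nabla_{X_\alpha}h$ and $\phi$, and that is where the only real care is required.
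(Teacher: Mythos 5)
Your reduction of the whole chain to the single identity $Ricci(X_\alpha)=2-Tr(h^2)$ is exactly right: the remaining equalities are the relations $Tr(h^2)=2\lambda^2=-2Det(h)$ and $\lambda^2=1-Det(II)$ already recorded in Section~2.1. For that single identity the paper offers no proof at all --- it is imported by citation from Blair's book and from [hoz2], so there is no ``paper's own argument'' to match. Both of your derivations are correct, and they in fact mirror the two cited sources: the first (taking the trace of property~(6) over $\xi$ and killing the $\phi\,\nabla_{X_\alpha}h$ term as the trace of a skew times a symmetric operator) is essentially Blair's computation of $Tr\,\ell$; the second (the Riccati identity $R(X_\alpha,\cdot)X_\alpha=\nabla_{X_\alpha}B+B^2$ with $B=\nabla X_\alpha=-\phi-\phi h$, $B^2=-I+h^2$ on $\xi$, and $Tr(\nabla_{X_\alpha}B)=X_\alpha\cdot\operatorname{div}X_\alpha=0$) is the second-fundamental-form/Jacobi-field viewpoint that the paper itself sets up just before the statement and that [hoz2] uses to get $2Det(II)$. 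The one point deserving care, which you correctly flag, is that property~(6) as printed transcribes Blair's operator $\ell=R(\cdot,X_\alpha)X_\alpha$, so reading $R(X_\alpha,\cdot)X_\alpha$ literally against the convention $Ricci(X_\alpha)=\sum_i g(R(e_i,X_\alpha)X_\alpha,e_i)$ flips a sign relative to your Riccati computation; your second route is convention-unambiguous and settles the sign, so nothing is missing. A small simplification: $Tr(B)=Tr(-\phi-\phi h)=0$ identically (skew plus skew-times-symmetric), so the vanishing of $Tr(\nabla_{X_\alpha}B)$ does not even require invoking volume preservation.
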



\subsection{Torsion, Dirichlet functional and critical contact metrics}\label{s22}

In \cite{ch}, Chern-Hamilton define the torsion of a contact metric as follows.
\begin{definition}
Given a contact metric $g$ on $(M,\alpha)$, the tensor defined as
$$\tau:=\mathcal{L}_{X_\alpha} g$$
is called the {\em torsion} of $g$.
\end{definition}

We note that since $\alpha$ is invariant under $X_\alpha$, we have $||\tau||=2||h||$. We call the quantity $||\tau||$ the {\em scalar torsion} of the contact metric $g$.

\begin{proposition}(\cite{tanno,torsion})
The following properties hold for the torsion function $\tau$:

(1) $\tau=2g(h \phi.,.)$

(2) $\tau \phi=-2g(h.,.)$

(3) $\nabla_{X_\alpha} \tau=2g((\nabla_{X_\alpha}h)\phi.,.)$
\end{proposition}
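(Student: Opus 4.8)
The plan is to derive part (1) as the fundamental identity and then obtain (2) and (3) from it by algebraic and differential manipulation. The conceptual starting point is that the tensor $\omega(Y,Z):=g(Y,\phi Z)=\tfrac12 d\alpha(Y,Z)$ is invariant under the Reeb flow: since $\mathcal{L}_{X_\alpha}\alpha=0$ we have $\mathcal{L}_{X_\alpha}d\alpha=d(\mathcal{L}_{X_\alpha}\alpha)=0$, hence $\mathcal{L}_{X_\alpha}\omega=0$. Viewing $\omega$ as a contraction of $g$ with $\phi$ and applying the Leibniz rule for the Lie derivative, while recalling $\mathcal{L}_{X_\alpha}\phi=2h$, I obtain
$$0=(\mathcal{L}_{X_\alpha}\omega)(Y,Z)=\tau(Y,\phi Z)+2\,g(Y,hZ),$$
so that, using the symmetry of $h$, the master identity is $\tau(Y,\phi Z)=-2\,g(hY,Z)$.

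From this master identity, (1) and (2) follow by purely algebraic bookkeeping. I will use that $\phi$ is skew-symmetric with respect to $g$, which is immediate from the fact that $g(\cdot,\phi\cdot)=\tfrac12 d\alpha$ is antisymmetric, together with the relations $\phi^2=-I+\alpha\otimes X_\alpha$, $h\phi=-\phi h$, and $hX_\alpha=0$ recorded in the preceding proposition. On the contact distribution $\xi=\ker\alpha$, where $\phi$ is invertible with $\phi^{-1}=-\phi$, substituting $Z\mapsto-\phi Z$ in the master identity and moving $\phi$ and $h$ across the metric converts $\tau(Y,\phi Z)=-2g(hY,Z)$ into $\tau(Y,Z)=2\,g(h\phi Y,Z)$, which is (1); reading the same identity with $\phi$ in the first slot and using symmetry of $\tau$ yields $(\tau\phi)(Y,Z)=\tau(\phi Y,Z)=-2\,g(hY,Z)$, which is (2). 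I will then check separately that both sides of (1) and (2) vanish whenever an argument is $X_\alpha$: indeed $\tau(X_\alpha,\cdot)=0$ follows directly from $\mathcal{L}_{X_\alpha}\alpha=0$, while $\phi X_\alpha=0$ and $hX_\alpha=0$ kill the right-hand sides. This shows the identities hold on all of $TM$.

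For (3) I differentiate the identity (1) along $X_\alpha$. Because the Levi-Civita connection is metric, $\nabla g=0$, the operator $\nabla_{X_\alpha}$ passes through $g$; and since $\nabla_{X_\alpha}\phi=0$ by the preceding proposition, it also passes through $\phi$. Hence
$$(\nabla_{X_\alpha}\tau)(Y,Z)=2\,g\big((\nabla_{X_\alpha}(h\phi))Y,Z\big)=2\,g\big((\nabla_{X_\alpha}h)\phi Y,Z\big),$$
which is precisely (3).

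The only genuine obstacle is the bookkeeping: tracking which slot $\phi$ and $h$ act in, and invoking the symmetry of $h$, the skew-symmetry of $\phi$, and the anticommutation $h\phi=-\phi h$ correctly when transferring operators across $g$. All the structural content is supplied by the $X_\alpha$-invariance of $\tfrac12 d\alpha$ and by the compatibility of the connection with both $g$ and $\phi$ along the Reeb direction; once these are in hand, each part reduces to a short, routine computation.
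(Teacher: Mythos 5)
Your proof is correct: the master identity $\tau(Y,\phi Z)=-2g(hY,Z)$ obtained from $\mathcal{L}_{X_\alpha}d\alpha=0$ together with $\mathcal{L}_{X_\alpha}\phi=2h$, the algebraic passage to (1) and (2) via $\phi^2=-I+\alpha\otimes X_\alpha$, $h\phi=-\phi h$ and the separate check on the Reeb direction, and the differentiation of (1) using $\nabla g=0$ and $\nabla_{X_\alpha}\phi=0$ for (3) are all sound. The paper records this proposition as an observation without proof (it is standard, going back to Tanno and Blair), and your argument is exactly the expected verification.
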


Chern and Hamilton in \cite{ch} defined the Dirichlet energy as follows.


\begin{definition}
The Dirichlet energy functional $E:\mathcal{M}(\alpha)\rightarrow \mathbb{R}$ is defined as
$$\mathcal{E}(g):=\frac{1}{2}\int_M ||\tau||^2 dVol(g).$$
\end{definition}

We note that
$$\mathcal{E}(g)=\frac{1}{2}\int_M ||\tau||^2 dVol(g)=2\int_M ||h||^2 dVol(g)=\int_M \lambda^2 \alpha \wedge d\alpha$$
$$=Vol(\alpha \wedge d\alpha)-\frac{1}{2}\int_M Ricci(X_\alpha) \ \alpha \wedge d\alpha.$$

In particular, if we let $\tilde{\alpha}=C\alpha$ for some constant $C$, and assuming $g$ and $\tilde{g}$ are contact metrics constructed using $\alpha$ and $\tilde{\alpha}$, respectively, with a fixed almost complex structure $\phi$, we have
$$\mathcal{E}(\tilde{g})=\int_M ||\mathcal{L}_{X_{\tilde{\alpha}}} \phi ||^2\ \tilde{\alpha}\wedge d\tilde{\alpha}=\int_M \frac{1}{C^2} ||\mathcal{L}_{X_\alpha} \phi||^2\ C^2 \alpha \wedge d\alpha=\mathcal{E}(g).$$

Therefore, if needed, one can assume $\alpha \wedge d\alpha$ to be a probability measure without loss of generality, and the measure entropy of the Reeb flow appearing in our analysis, as we will see, can be interpreted as {\em Liouville entropy}.

 The variational method of Tanno \cite{tanno} investigates contact metrics which are critical point of the Dirichlet energy functional and we call them {\em critical contact metrics}.

\begin{theorem}[Tanno 89 \cite{tanno}]\label{tanno}
TFAE:

(1) $g$ is a critical contact metric;

(2) $\nabla_X h=2h\phi$;

(3) $\nabla_X \tau=2\tau \phi$.
\end{theorem}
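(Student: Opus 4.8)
The plan is to prove the cycle by first establishing the purely algebraic equivalence (2)$\Leftrightarrow$(3), and then deriving the Euler--Lagrange equation of $\mathcal{E}$ to obtain (1)$\Leftrightarrow$(2); this reconstructs Tanno's variational argument using the structural facts already recorded above.

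For (2)$\Leftrightarrow$(3) I would work entirely pointwise. Since $\nabla_X\phi=0$ and $\nabla_X g=0$, differentiating $\tau=2g(h\phi\,\cdot,\cdot)$ along $X$ gives $\nabla_X\tau=2g((\nabla_X h)\phi\,\cdot,\cdot)$, which is exactly item (3) of the preceding proposition. Substituting $\nabla_X h=2h\phi$ and using $\phi^2=-I$ on $\xi$ together with $\tau\phi=-2g(h\,\cdot,\cdot)$ turns the right-hand side into $2\tau\phi$; conversely, since $\phi$ is invertible on $\xi$ and $g$ is nondegenerate, the identity $\nabla_X\tau=2\tau\phi$ can be read backwards to recover $\nabla_X h=2h\phi$. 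So this step is a short manipulation of the recorded relations.

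The substance is (1)$\Leftrightarrow$(2). First I would describe $T_g\mathcal{M}(\alpha)$: a contact metric is determined by its adapted $\phi$, so an admissible infinitesimal deformation is a $(1,1)$-tensor $\psi$ on $\xi$ (extended by $\psi X=0$) that is $g$-symmetric and anticommutes with $\phi$, these being the first-order conditions for $\phi_t$ to remain a compatible almost complex structure (differentiating $\phi_t^2=-I$ gives $\psi\phi+\phi\psi=0$, and compatibility with the fixed $d\alpha$ forces $g$-symmetry). The key simplifications are that the volume form $dVol=\tfrac12\alpha\wedge d\alpha$ is common to all $g\in\mathcal{M}(\alpha)$, and that each $h_t$ remains $g_t$-self-adjoint, so $\|h_t\|^2$ is a fixed multiple of $\mathrm{Tr}(h_t^2)$, the trace of an endomorphism, hence metric-independent. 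Thus $\frac{d}{dt}\mathcal{E}(g_t)$ reduces to $\mathrm{const}\cdot\int_M \mathrm{Tr}(h\,h')\,dVol$ with $h'=\tfrac12\mathcal{L}_X\psi$ (using that $X$ is fixed once $\alpha$ is). I would then rewrite $\mathcal{L}_X\psi$ covariantly via $\nabla X=-\phi-\phi h$, integrate the $\nabla_X\psi$-part by parts using that $X$ preserves $dVol$, and collect the algebraic remainder. The pointwise relations $h^2=\lambda^2 I$ on $\xi$ and $h\phi h=-\lambda^2\phi$, together with $\mathrm{Tr}(\phi\psi)=0$ (since $\phi$ is skew and $\psi$ symmetric), collapse all $h$-quadratic terms, leaving $\frac{d}{dt}\mathcal{E}(g_t)=\mathrm{const}\cdot\int_M \mathrm{Tr}\!\big((2h\phi-\nabla_X h)\psi\big)\,dVol$.

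The endgame, and the step I expect to be the main obstacle, is converting the vanishing of this first variation for all admissible $\psi$ into the pointwise equation. Here I would verify that the Euler--Lagrange tensor $2h\phi-\nabla_X h$ itself lies in the test space: $h\phi$ is $g$-symmetric and anticommutes with $\phi$ directly from $h\phi=-\phi h$, while differentiating $h\phi=-\phi h$ along $X$ with $\nabla_X\phi=0$ shows $\nabla_X h$ anticommutes with $\phi$ and stays symmetric. Since $(A,\psi)\mapsto\int_M\mathrm{Tr}(A\psi)\,dVol$ is a multiple of the $L^2$ inner product on $g$-symmetric tensors anticommuting with $\phi$, orthogonality of $2h\phi-\nabla_X h$ to every such $\psi$ forces $2h\phi-\nabla_X h\equiv 0$, i.e. (2). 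The genuine care is in pinning down the admissible variations precisely and in the integration-by-parts bookkeeping that isolates this tensor; everything else is structural.
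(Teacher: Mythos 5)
Your proposal is correct, but note that the paper itself offers no proof of this statement: it is imported verbatim from Tanno's 1989 paper (and is also Theorem~10.12-type material in Blair's book), so there is no internal argument to compare against. What you have written is essentially a faithful reconstruction of Tanno's variational derivation. The pieces all check: (2)$\Leftrightarrow$(3) follows exactly as you say from $\nabla_X\tau=2g((\nabla_Xh)\phi\,\cdot,\cdot)$, $\tau\phi=-2g(h\,\cdot,\cdot)$ and invertibility of $\phi$ on $\xi$; the tangent space to $\mathcal{M}(\alpha)$ at $g$ is indeed the $g$-symmetric, $\phi$-anticommuting endomorphisms of $\xi$ (realizable, e.g., via $\phi_t=\phi\exp(t\phi\psi)$); and the first-variation bookkeeping works out on the nose. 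Writing $\mathcal{L}_X\psi=\nabla_X\psi+(\phi+\phi h)\psi-\psi(\phi+\phi h)$, the terms $\mathrm{Tr}(h\phi h\psi)=-\lambda^2\mathrm{Tr}(\phi\psi)$ and $\mathrm{Tr}(h\psi\phi h)=\lambda^2\mathrm{Tr}(\psi\phi)$ vanish because $\mathrm{Tr}(\phi\psi)=0$ (skew against symmetric), the cross terms combine via $\phi h=-h\phi$ and cyclicity to $2\mathrm{Tr}(h\phi\psi)$, and integration by parts against the invariant volume $\alpha\wedge d\alpha$ converts $\mathrm{Tr}(h\nabla_X\psi)$ into $-\mathrm{Tr}((\nabla_Xh)\psi)$, yielding $\frac{d}{dt}\mathcal{E}=2\int_M\mathrm{Tr}\bigl((2h\phi-\nabla_Xh)\psi\bigr)\,dVol$. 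Your final step is the genuinely important one and you handle it correctly: $2h\phi-\nabla_Xh$ is itself symmetric, $\phi$-anticommuting and annihilates $X_\alpha$, so taking $\psi$ equal to it forces it to vanish in $L^2$, hence pointwise. The only cosmetic caveat is that $h^2=\lambda^2 I$ on $\xi$ should be read as the (everywhere smooth) identity $h^2=\tfrac12\mathrm{Tr}(h^2)\,I$, valid even where $h$ vanishes; with that understood, your argument is complete and is the standard one.
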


Deng has shown the following, which reduces the pursuit of the minimizers of Chern-Hamilton energy function to investigating critical contact metrics.

\begin{theorem}[Deng 91 \cite{deng}]\label{dength}
Any contact metric minimizes $E$, if and only if, it is a critical contact metric.
\end{theorem}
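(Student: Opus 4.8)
The plan is to prove the two implications separately. One direction is formal and uses only the previously stated variational principle, while the converse is the real content of Deng's theorem and requires understanding the geometry of the configuration space.

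\emph{Minimizer $\Rightarrow$ critical.} A minimizer of the differentiable functional $\mathcal{E}$ on $\mathcal{M}(\alpha)$ is in particular a critical point, so its first variation vanishes in every admissible direction. By Tanno's variational principle (Theorem~\ref{tanno}), the vanishing of the first variation is equivalent to the structure equation $\nabla_{X_\alpha} h = 2h\phi$, i.e. to being a critical contact metric. This direction invokes nothing beyond Theorem~\ref{tanno}.

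\emph{Critical $\Rightarrow$ minimizer.} Here I would exploit the geometry of $\mathcal{M}(\alpha)$. With $\alpha$ fixed, an adapted contact metric is determined by a $d\alpha$-compatible almost complex structure $\phi$ on the rank-$2$ bundle $\xi=\ker\alpha$, and fiberwise the set of such compatible complex structures on the symplectic plane $(\xi_p,d\alpha_p)$ is $SL(2,\mathbb{R})/SO(2)$, a model of the hyperbolic plane $\mathbb{H}^2$. Thus $\mathcal{M}(\alpha)$ is the space of sections of a bundle with nonpositively curved (Hadamard) fibers, and it carries a natural notion of fiberwise-geodesic path $\{\phi_t\}$, along which each pointwise configuration travels along a geodesic of $\mathbb{H}^2$; since each fiber is geodesically convex, any two sections are joined by such a path, so comparing along these paths suffices. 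Given a critical metric $g_0$ and an arbitrary target $g_1\in\mathcal{M}(\alpha)$, join them by a fiberwise-geodesic path $\{g_t\}_{t\in[0,1]}$ and study the real function $t\mapsto\mathcal{E}(g_t)=\int_M ||h_t||^2\,\alpha\wedge d\alpha$, noting that the measure $\tfrac12\alpha\wedge d\alpha=dVol(g_t)$ is fixed once $\alpha$ is (Proposition, item (1)), so that all $t$-dependence sits in $h_t=\tfrac12\mathcal{L}_{X_\alpha}\phi_t$ and in the norm taken with respect to $g_t$. The strategy is to show this function is convex with a critical point at $t=0$: criticality there is exactly $\nabla_{X_\alpha}h_0=2h_0\phi_0$, forcing $\frac{d}{dt}\mathcal{E}(g_t)|_{t=0}=0$ by Theorem~\ref{tanno}, while convexity would make $\mathcal{E}(g_t)$ nondecreasing on $[0,1]$, yielding $\mathcal{E}(g_0)\le\mathcal{E}(g_1)$ and hence global minimality. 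Conceptually this is the same mechanism by which energy functionals of maps into nonpositively curved targets are geodesically convex, so that critical points are automatically minimizers.

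The main obstacle is precisely establishing non-negativity of the second variation. I would compute $\frac{d^2}{dt^2}\mathcal{E}(g_t)$ directly, using the structure relations $\nabla_{X_\alpha}\phi=0$, $h\phi=-\phi h$ and $Tr(h)=0$ together with the fiberwise geodesic equation satisfied by $\phi_t$, and then integrate by parts along the Reeb direction — legitimate since $X_\alpha$ preserves $\alpha\wedge d\alpha$. The difficulty is that the Lie derivative $\mathcal{L}_{X_\alpha}$ entangles the fiberwise deformation with differentiation along the flow, producing cross terms of no obvious sign; the crux is to verify that the nonpositive fiber curvature of $\mathbb{H}^2$ enters with the correct sign and, after the integration by parts, exactly absorbs these cross terms so that the integrand reorganizes into a manifest sum of squares of the first-order deformation tensor and its $X_\alpha$-derivative. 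Granting this reorganization, convexity follows and the proof is complete.
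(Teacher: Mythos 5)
First, note that the paper does not prove this statement at all: it is quoted verbatim from Deng's 1991 paper \cite{deng}, whose argument is a direct computation of the second variation of $\mathcal{E}$ using Tanno's equation. So the only thing to assess is whether your proposal stands on its own, and it does not quite. Your first direction (minimizer $\Rightarrow$ critical) is fine and is essentially a tautology given Theorem~\ref{tanno}. The problem is the converse, which is the entire content of Deng's theorem, and there your argument ends exactly where the mathematics begins: you reduce everything to the claim that $t\mapsto\mathcal{E}(g_t)$ is convex along a fiberwise-geodesic path in $\mathcal{M}(\alpha)$, and then write ``Granting this reorganization, convexity follows and the proof is complete.'' The non-negativity of the second variation --- the cancellation of the cross terms produced by $\mathcal{L}_{X_\alpha}$ against the curvature term of the fiber --- is precisely the computation that constitutes the theorem, and it is not carried out. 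As submitted, this is a proof of the easy direction plus a plausible strategy for the hard one, not a proof.

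That said, the strategy itself is sound and worth completing, so let me point at what is missing concretely. The functional $\mathcal{E}(g)=\int_M \lVert\mathcal{L}_{X_\alpha}\phi\rVert^2\,\alpha\wedge d\alpha$ is the partial Dirichlet energy (differentiation only along the Reeb direction) of the section $\phi$ of a bundle whose fibers are $Sp(2,\mathbb{R})/U(1)\cong\mathbb{H}^2$, and the standard convexity of energy along geodesic homotopies into nonpositively curved targets would apply \emph{provided} you verify two things you currently pass over: (i) that the identification of nearby fibers used implicitly in $\mathcal{L}_{X_\alpha}\phi$ --- namely the linearized Reeb flow on $(\xi,d\alpha)$ --- acts on the fiber by isometries of the relevant metric (it does, because it is symplectic, but this must be said, since otherwise the curvature term in the second variation formula is not the one you want); and (ii) the actual second variation identity, in which the variation field $V=\partial_t\phi_t$ is parallel in $t$ along the geodesic homotopy and one must show
\begin{equation*}
\frac{d^2}{dt^2}\,\mathcal{E}(g_t)\;=\;2\int_M \Bigl(\lVert \nabla_{X_\alpha} V\rVert^2-\langle R^{\mathbb{H}^2}(V,\mathcal{L}_{X_\alpha}\phi_t)\,\mathcal{L}_{X_\alpha}\phi_t,\,V\rangle\Bigr)\,\alpha\wedge d\alpha\;\ge\;0,
\end{equation*}
with no leftover boundary or cross terms after integrating by parts against the $X_\alpha$-invariant measure $\alpha\wedge d\alpha$. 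Until that identity is derived from the structure relations $\nabla_{X_\alpha}\phi=0$, $h\phi=-\phi h$, $\mathrm{Tr}(h)=0$, the argument is incomplete. Deng's own route avoids the geodesic-homotopy formalism entirely and obtains the inequality $\mathcal{E}(g')\ge\mathcal{E}(g)$ by an explicit tensor computation at a metric satisfying $\nabla_{X_\alpha}h=2h\phi$; if you prefer your more conceptual route, you must supply the displayed computation, not defer it.
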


An immediate consequence of the above is:

\begin{corollary}
If $g$ is a critical contact metric for $(M,\alpha)$ and $||\tau||=0$ (or equivalently, $||h||=0$) at some point $p\in M$, then we have $||\tau||=0$ (or equivalently, $||h||=0$) along the orbit of $X_\alpha$ containing $p$.
\end{corollary}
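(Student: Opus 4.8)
The plan is to prove the stronger fact that $\lambda^2 = ||h||^2 = \tfrac12 Tr(h^2)$ is constant along each orbit of $X_\alpha$; since $||\tau|| = 2||h|| = 2\lambda$, this yields the corollary at once, because if $\lambda$ vanishes at $p$ then the constant value of $\lambda^2$ along the orbit through $p$ must be $0$, forcing $||\tau|| \equiv 0$ along that orbit. As $\lambda^2$ is a smooth function (so no square-root ambiguity arises near its zeros), it suffices to verify that its derivative along the flow, equivalently $X_\alpha\cdot Tr(h^2)$, vanishes identically on $M$.

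First I would rewrite the orbit-derivative as a covariant derivative of tensors: since the trace is a contraction and commutes with the metric connection, $X_\alpha\cdot Tr(h^2) = Tr(\nabla_{X_\alpha}(h^2))$. Expanding $\nabla_{X_\alpha}(h^2) = (\nabla_{X_\alpha}h)h + h(\nabla_{X_\alpha}h)$ by the Leibniz rule and using cyclicity of the trace gives $Tr(\nabla_{X_\alpha}(h^2)) = 2\,Tr\big((\nabla_{X_\alpha}h)\,h\big)$. At this point I would invoke Tanno's variational principle in the form $\nabla_{X_\alpha}h = 2h\phi$ (Theorem~\ref{tanno}), which is available precisely because $g$ is assumed critical, to obtain $X_\alpha\cdot Tr(h^2) = 4\,Tr(h\phi h)$.

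The crux is then the purely algebraic identity $Tr(h\phi h) = 0$. By cyclicity of the trace, $Tr(h\phi h) = Tr(\phi h^2)$. Because $h$ has eigenvalues $\lambda, -\lambda, 0$ on the eigenbasis $(e_1, e_2=\phi e_1, X_\alpha)$, its square is $h^2 = \lambda^2(I - \alpha\otimes X_\alpha) = -\lambda^2\phi^2$, i.e. $\lambda^2$ times the projection onto $\xi = \ker\alpha$. Hence $Tr(\phi h^2) = \lambda^2\,Tr\big(\phi(I-\alpha\otimes X_\alpha)\big) = \lambda^2\big(Tr(\phi) - Tr(\phi\,\alpha\otimes X_\alpha)\big)$, and both terms vanish: $Tr(\phi)=0$ since $\phi$ rotates $\xi$ and kills $X_\alpha$, while $\phi(\alpha\otimes X_\alpha)$ is the zero map because $\phi(X_\alpha)=0$. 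Therefore $X_\alpha\cdot Tr(h^2)\equiv 0$, so $\lambda^2$ is constant along orbits, completing the argument.

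I do not expect a genuine obstacle, as the whole computation is a few lines of tensor algebra; the only place demanding care is the bookkeeping of the sign and structural conventions — using $\phi^2 = -I + \alpha\otimes X_\alpha$, $h(X_\alpha)=0$, and $Tr(\phi)=0$ — to recognize $h^2$ as a multiple of the $\xi$-projection, so that the offending trace collapses. A parallel route differentiating $||\tau||^2$ directly via $\nabla_{X_\alpha}\tau = 2\tau\phi$ (part (3) of Theorem~\ref{tanno}) is equally valid, but carrying everything through $h$ and the operator trace keeps the computation shortest.
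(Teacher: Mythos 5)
Your proof is correct. Every step checks out: $X_\alpha\cdot Tr(h^2)=Tr(\nabla_{X_\alpha}(h^2))=2\,Tr((\nabla_{X_\alpha}h)h)=4\,Tr(h\phi h)$ by Tanno's equation, and $Tr(h\phi h)=Tr(\phi h^2)=\lambda^2\,Tr(\phi)=0$ (one can also see this instantly from the anticommutation $h\phi=-\phi h$ together with cyclicity of the trace, which forces $Tr(\phi h^2)=-Tr(\phi h^2)$). Your route is, however, not the one the paper has in mind. The paper treats the corollary as an immediate consequence of Tanno's identity $\nabla_{X_\alpha}\tau=2\tau\phi$ viewed as a linear first-order ODE for the tensor $\tau$ along each orbit: uniqueness of solutions forces $\tau$ to vanish on the whole orbit once it vanishes at one point. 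You instead prove the stronger pointwise statement $X_\alpha\cdot||h||^2=0$ everywhere, which the paper only establishes later (as Perrone's observation, Proposition~\ref{per}(1)) via a computation in the eigenbasis $(e_1,e_2=\phi e_1)$ of $h$. Your trace computation buys two things over that eigenbasis argument: it is coordinate-free, and it sidesteps the regularity issue that the eigenbasis is not well defined (and $\lambda=||h||$ need not be differentiable) at zeros of $h$, whereas $Tr(h^2)$ is smooth everywhere. The ODE-uniqueness argument, for its part, is shorter if one only wants the corollary as stated, but yields less: it propagates the vanishing of $\tau$ without showing that the scalar torsion is a first integral of the Reeb flow, which is the fact the paper ultimately needs for the classification.
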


Perrone \cite{torsion} investigates the case of critical contact metrics with $||\tau||\neq 0$ and reveal the connections to Anosov dynamics. We review those observations here, as they are crucial for what follows.

We can use the basis $(e_1,e_2=\phi e_1)$ as above and write
$$\nabla_{e_1}X_\alpha =-\phi e_1-\phi h(e_1)=(-1-\lambda)e_2$$
$$\nabla_{e_2}X_\alpha=-\phi e_2-\phi h(e_2)=(1-\lambda)e_1$$
and for some function $\mu$ we have
$$\nabla_{X_\alpha}e_1=-\mu e_2$$
$$\nabla_{X_\alpha}e_2=\mu e_1,$$
since $g(\nabla_{X_\alpha}e_1,X_\alpha)=g(\nabla_{X_\alpha}e_1,e_1)=0.$

In order to use Tanno's equation, we compute
$$(\nabla_{X_\alpha}h)(e_1)=\nabla_{X_\alpha}h(e_1)-h(\nabla_{X_\alpha}e_1)=(X_\alpha \cdot \lambda)e_1-2\lambda \mu e_2.$$

Setting this equal to $2h\phi e_1=-2\lambda e_2$ implies $X_\alpha \cdot \lambda=0$ (this generalizes the above corollary) and $\mu=1$. Note that this also holds when $\lambda=0$.

Moreover, the connection to Anosov flows comes from the fact that at the points where $\lambda \neq 0$, we have
$$g([X_\alpha,e_1],e_2)=g(\nabla_{X_\alpha}e_1-\nabla_{e_1}X_\alpha,e_2)=g(-e_2+(1+\lambda)e_2,e_2)=\lambda>0$$
and
$$g([X_\alpha,e_2],-e_1)=g(\nabla_{X_\alpha}e_2-\nabla_{e_2}X_\alpha,-e_1)=g(e_1+(-1+\lambda)e_1,-e_1)=-\lambda<0,$$
implying that the plane fields $\langle X_\alpha,e_1\rangle$ and $\langle X_\alpha,e_2\rangle$ are positive and negative contact structures respectively. The condition of a vector field, here $X_\alpha$, lying in the intersection of a pair of transverse negative and positive contact structures is known to be equivalent to a weak form of {\em hyperbolicty} in dynamics. When satisfied globally, such vector field or flow is called {\em projectively Anosov} \cite{conf,mitsumatsu}. In fact, it is known (see \cite{ol,conf,hoz4}) that when the flow preserves a volume form such condition implies hyperbolicity (and when satisfied globally, implies {\em Anosovity} of the flow). We will later see this explicitly in Lemma~\ref{cpa}. In the next section, we will explore the geometric implications of (projective) Anosovity. But first, we record these observations in the following proposition. Note that these are direct consequences of Tanno's formula (Theorem~\ref{tanno}).

\begin{proposition}[Perrone 05 \cite{torsion}]\label{per}
With the notation above,

(1) $X_\alpha \cdot ||\tau||=X_\alpha \cdot ||h||=X_\alpha \cdot \lambda=0$,

(2) $\nabla_{X_\alpha}e_1=-e_2$ and $\nabla_{X_\alpha}e_2=e_1$.

(3) $[X_\alpha,e_1]=\lambda e_2$ and $[X_\alpha,e_2]=\lambda e_1$. In particular, the plane fields $\langle X_\alpha,e_1\rangle$ and $\langle X_\alpha,e_2\rangle$ are positive and negative contact structures, respectively.
\end{proposition}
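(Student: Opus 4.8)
The plan is to work entirely in the orthonormal frame $(X_\alpha, e_1, e_2 = \phi e_1)$ that diagonalizes $h$, with $h(e_1) = \lambda e_1$ and $h(e_2) = -\lambda e_2$, and to extract all three items by combining the structural identity $\nabla X_\alpha = -\phi - \phi h$ from part~(5) of the basic proposition with Tanno's equation $\nabla_{X_\alpha} h = 2h\phi$. First I would record the transverse covariant derivatives: feeding $e_1$ and $e_2$ into $\nabla X_\alpha = -\phi - \phi h$ and using $\phi e_1 = e_2$, $\phi e_2 = -e_1$ gives $\nabla_{e_1} X_\alpha = -(1+\lambda)e_2$ and $\nabla_{e_2} X_\alpha = (1-\lambda)e_1$. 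These are pure bookkeeping once the frame is fixed.

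The key step is to pin down $\nabla_{X_\alpha}$ on the frame. Since $X_\alpha$ is a geodesic field (part~(2)) and $(e_1,e_2)$ is orthonormal and orthogonal to $X_\alpha$, differentiating the relations $g(e_i, X_\alpha) = 0$, $g(e_i, e_i) = 1$ and $g(e_1, e_2) = 0$ along $X_\alpha$ forces $\nabla_{X_\alpha}$ to act as an infinitesimal rotation of the contact plane, so that $\nabla_{X_\alpha} e_1 = -\mu e_2$ and $\nabla_{X_\alpha} e_2 = \mu e_1$ for a single unknown function $\mu$. I would then expand Tanno's equation on $e_1$, computing $(\nabla_{X_\alpha} h)(e_1) = \nabla_{X_\alpha}(\lambda e_1) - h(\nabla_{X_\alpha} e_1) = (X_\alpha \cdot \lambda)\, e_1 - 2\lambda\mu\, e_2$ and equating it to $2h\phi(e_1) = -2\lambda e_2$. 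Matching the $e_1$-component yields $X_\alpha \cdot \lambda = 0$, which is item~(1) because $||\tau|| = 2||h|| = 2\lambda$; matching the $e_2$-component yields $\mu = 1$ (where $\lambda \neq 0$), turning the rotation relations into item~(2). Item~(3) then follows by subtraction: $[X_\alpha, e_1] = \nabla_{X_\alpha} e_1 - \nabla_{e_1} X_\alpha = -e_2 + (1+\lambda)e_2 = \lambda e_2$, and likewise $[X_\alpha, e_2] = \lambda e_1$. The contact signs read off from $g([X_\alpha, e_1], e_2) = \lambda > 0$ and $g([X_\alpha, e_2], -e_1) = -\lambda < 0$, identifying $\langle X_\alpha, e_1\rangle$ and $\langle X_\alpha, e_2\rangle$ as positive and negative contact structures wherever $\lambda \neq 0$.

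The one genuinely delicate point, and the step I expect to need the most care, is the locus $\{\lambda = 0\}$, where the eigenframe of $h$ degenerates and is no longer canonically defined, and where the $e_2$-component of Tanno's equation collapses to $0 = 0$, leaving $\mu$ undetermined by this computation alone. In Perrone's setting of nowhere-vanishing torsion this locus is empty, so the argument above is already complete. More generally, the claim that item~(2) persists across $\{\lambda = 0\}$ should be handled using item~(1): since $X_\alpha \cdot \lambda = 0$, the zero set $\{\lambda = 0\}$ is a union of Reeb orbits on which $h \equiv 0$, and on such orbits the relations can be obtained either by continuity from the complementary region $\{\lambda \neq 0\}$ or by selecting a smooth frame adapted to the situation. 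Everything else is frame computation; the only conceptual inputs are the geodesic property of $X_\alpha$, the explicit form of $\nabla X_\alpha$, and Tanno's equation.
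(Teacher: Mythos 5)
Your proposal is correct and follows essentially the same route as the paper: compute $\nabla_{e_i}X_\alpha$ from $\nabla X_\alpha=-\phi-\phi h$, observe $\nabla_{X_\alpha}e_1=-\mu e_2$, $\nabla_{X_\alpha}e_2=\mu e_1$ from orthonormality and the geodesic property, then feed $e_1$ into Tanno's equation to extract $X_\alpha\cdot\lambda=0$ and $\mu=1$, and obtain the brackets and contact signs by subtraction. Your treatment of the locus $\{\lambda=0\}$ is in fact slightly more careful than the paper's, which simply asserts that the conclusion "also holds when $\lambda=0$."
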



\section{Elements from geometry and ergodic theory of \\
Anosov contact 3-flows}\label{s3}

In this section, we review the required background on Anosov 3-flows. A thorough treatment of the topics included in this section can be found in \cite{hyp}.

\begin{definition}\label{anosov}
We call a flow $X^t$ {\em Anosov}, if there exists a splitting $TM=E^{ss} \oplus E^{uu} \oplus \langle X \rangle$, such that the splitting is continuous and invariant under $X_*^t$ and 
$$ ||X_*^t (v)  || \geq e^{Ct}||v || \text{ for any }v \in E^{uu},$$
$$||X_*^t (u) || \leq e^{-Ct}||u ||\text{ for any }u \in E^{ss},$$
where $C$ is a positive constant, and $||.||$ is induced from some Riemannian metric on $TM$. We call the line bundles $E^{uu}$ and $E^{ss}$, the strong unstable and stable directions (or bundles), respectively, and the plane fields $E^u:=E^{uu}\oplus \langle X\rangle$ and $E^s:=E^{ss}\oplus \langle X\rangle$ are referred to as {\em weak} unstable and stable bundles, respectively. Moreover, we call the vector field $X$, the generator of such flow, an {\em Anosov vector field}.
\end{definition}

It is known (but it's not trivial) that the weak invariant plane fields $E^s$ and $E^u$ are $C^1$ \cite{hps} and tangent to a pair of foliations.

In this paper, we are interested in the case where an Anosov flows is {\em contact}. That means the transverse invariant plane field $E^{ss}\oplus E^{uu}$ is a contact structure. This is equivalent to the generator of the flow being the Reeb vector field $X_\alpha$, for some appropriate contact form $\alpha$. For such $\alpha$ we necessarily have $\ker{\alpha}=E^{ss}\oplus E^{uu}$. The geodesic flow of a hyperbolic surfaces on $UT\Sigma$ gives the standard example of this situation (see Section~\ref{s4}). Note that for Anosov contact flows, we have $E^{ss}=\ker{\alpha}\cap E^s$ and $E^{uu}=\ker{\alpha}\cap E^u$ are $C^1$ line bundles.


Few times in this paper, we use a natural generalization of Anosovity condition, which we bring next.

\begin{definition}\label{ca}
We call a flow $X^t$ {\em projectively Anosov}, if its induced flow on $TM/\langle X\rangle$ admits a {\em dominated splitting}. That is, there exists a splitting $TM/\langle X \rangle=E^s \oplus E^u$, such that the splitting is continuous and invariant under $\tilde{X}_*^t$ and 
$$ ||\tilde{X}_*^t (v)  || / ||\tilde{X}_*^t (u) || \geq e^{Ct}||v || /||u ||,$$
for any $v \in E^u$ and $u \in E^s$, where  $C$ is a positive constant, $||.||$ is induced from some Riemannian metric on $TM / \langle X\rangle$ and $\tilde{X}_*^t$ is the flow induced on $TM/\langle X \rangle$.

Moreover, we call the generating vector field $X$, a {\em projectively Anosov vector field}.
\end{definition}


In \cite{mitsumatsu,conf}, it is shown:

\begin{proposition}\label{cabi}
Let $X$ be a nonsingular vector field on $M$. Then, $X$ is projectively Anosov, if and only if, for a pair of transverse negative and positive contact structures $(\xi_-,\xi_+)$, we have $X\subset \xi_+ \cap \xi_-$.
\end{proposition}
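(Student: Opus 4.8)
The plan is to prove the two implications separately, using as the central dictionary the observation that a $2$-plane field $\eta$ containing $X$ is a contact structure exactly when the line field $\bar\eta:=\eta/\langle X\rangle\subset TM/\langle X\rangle$ is nowhere invariant under the infinitesimal action of the flow, the sign of the contact structure recording the direction in which $\tilde X_*^t$ rotates $\bar\eta$. Concretely, writing $\eta=\langle X,Y\rangle$, the field $\eta$ is contact iff $X,Y,[X,Y]$ frame $TM$, i.e. iff $\overline{[X,Y]}\not\parallel\bar Y$ in $TM/\langle X\rangle$; since $\overline{[X,Y]}=\overline{\mathcal{L}_X Y}$ is, up to sign, the infinitesimal generator of $\tilde X_*^t$ applied to $\bar Y$, this is precisely the statement that $\bar\eta$ is rotated by the linearized flow, and the sign of $\beta\wedge d\beta$ (for $\eta=\ker\beta$, $\beta(X)=0$) agrees, after fixing orientation conventions, with the sign of that rotation. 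I would establish this correspondence first.

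For the implication ``projectively Anosov $\Rightarrow$ bi-contact'', I start from the dominated splitting $TM/\langle X\rangle=E^s\oplus E^u$. Invariance makes $E^s,E^u$ the directions along which the rotation above vanishes intrinsically, so each fiber circle $\mathbb{RP}^1$ is cut into two open sectors. The subtle point is that whether a line strictly between $E^s$ and $E^u$ is rotated \emph{at a given point} is metric dependent (only the non-rotation of the genuinely invariant fields $E^s,E^u$ is intrinsic). I would therefore first pass to an adapted metric for the dominated splitting, as furnished by Gourmelon \cite{gour} (or by Theorem~\ref{introexuni}), in which domination is immediate: there is a forward-invariant cone field about $E^u$ whose lines are pushed strictly inward at a rate bounded below. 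In such a metric the rotation of any line in the open sector between $E^s$ and $E^u$ has a definite, nowhere-vanishing sign, opposite in the two sectors. Choosing smooth line fields $\ell_+,\ell_-$ inside the two open sectors (possible since each sector is an open, continuously varying arc, keeping $\ell_\pm$ inside a slightly smaller closed cone away from $E^s,E^u$) and setting $\xi_\pm=\langle X\rangle\oplus\ell_\pm$, the dictionary yields two contact structures of opposite sign, transverse to each other and containing $X$.

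For the converse ``bi-contact $\Rightarrow$ projectively Anosov'', transversality of $\xi_+,\xi_-$ makes their quotient line fields $L_+,L_-$ distinct, cutting each fiber circle into two sectors. The two contact conditions say that $\tilde X_*^t$ rotates lines across $L_+$ in the positive and across $L_-$ in the negative direction, so one of the two sectors bounded by $L_-,L_+$ is strictly forward-invariant and the complementary one strictly backward-invariant; these are a pair of transverse invariant cone fields with strict, hence (by compactness of $M$) uniform, rotation. Applying the standard cone criterion, $E^u:=\bigcap_{t\ge0}\tilde X_*^t C^u$ and $E^s:=\bigcap_{t\ge0}\tilde X_*^{-t}C^s$ furnish a continuous invariant splitting, and the uniform rotation rate upgrades to the exponential domination estimate of Definition~\ref{ca}.

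I expect the main obstacle to be the careful passage between the pointwise differential-form contact condition and the circle dynamics of $\tilde X_*^t$ on $TM/\langle X\rangle$ — in particular, exploiting the metric-dependence of the rotation to remove the would-be obstruction at points where the infinitesimal generator is conformal, which is exactly where the adapted metric of the Appendix enters — together with the orientation bookkeeping needed to match positive and negative contact with the $E^u$ and $E^s$ sectors, and the extraction of a uniform exponential rate from the strictly invariant cone fields in the converse.
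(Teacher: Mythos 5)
Your argument is correct, but note that the paper does not prove Proposition~\ref{cabi} at all: it is quoted as a known result of Mitsumatsu and Eliashberg--Thurston \cite{mitsumatsu,conf}, and what you have written is essentially the standard proof from those references (the dictionary between the contact condition for a plane field containing $X$ and the nonvanishing rotation of its image line in $TM/\langle X\rangle$, line fields chosen in the two sectors between $E^s$ and $E^u$ for one direction, and the invariant cone-field criterion for the other). Two small points: the global choice of $\ell_\pm$ in the two sectors is unambiguous because $TM/\langle X\rangle$ is an oriented plane bundle (so the positively and negatively rotated sectors cannot be interchanged by monodromy), and the adapted metric you need in the forward direction is exactly Gourmelon's \cite{gour} --- the paper's Theorem~\ref{introexuni} is stated only for genuinely Anosov flows and controls Lyapunov exponents rather than immediate domination, so it is not the right tool to cite there.
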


An important geometric quantity in this context is the {\em expansion rate} of an invariant bundle, which measure how fast the norm involved in the definition of (projective) Anosovity {\em stretches or contracts} infinitesimally, in the direction of the unstable or stable bundles, respectively.

\begin{definition}
The following real functions $r_u$ and $r_s$ are called {\em the expansion rates }of the unstable and stable bundles, respectively.
$$r_u:=\frac{\partial}{\partial t} \ln{||\tilde{X}_*^t (v)||}\bigg|_{t=0} \ \ \  \ \ \ \ \ \ \ \ r_s:=\frac{\partial}{\partial t} \ln{||\tilde{X}_*^t (u)||}\bigg|_{t=0},$$
where $v\neq 0 \in E^{uu}$ and $u\neq 0 \in E^{ss}$ are arbitrary.
\end{definition}

One should note that the above definition does not depend on the choice of $v$ and $u$ and in general, these expansion rates are only differentiable along the flow (even assuming the norm is induced from some smooth Riemannian metric). However, one can observe that for an appropriate choice of Riemmanian metric on $M$, the expansion rates are $C^1$ (see \cite{simic}, Theorem~2.1). It is useful to characterize the expansion rates in terms of differential forms as well. Equivalent to a choice of norm on $E^{uu}$ (with respect to which we can define $r_u$) is defining the 1-form $\alpha_u$ which vanishes on $E^s$ and $|\alpha_u(e_u)|=1$ for the unit vector $e_u$. Similarly, we can define $\alpha_s$ and the expansion rates are characterized by

$$\mathcal{L}_X \alpha_u=r_u\alpha_u \ \ \  \ \ \ \ \ \ \ \ \mathcal{L}_X\alpha_s=r_s\alpha_s.$$

The following elementary facts can be found in \cite{hoz3}.
\begin{proposition}\label{anosovprop}
For any projectively Anosov flow generated by $X$,

(1) if $e_u$ is a differentiable lift of a (locally defined) unit vector field $\tilde{e}_u$, then $r_ue_u+q_uX=[e_u,X]$ for some function $q_u$. If such lift also has a differentiable invariant linear span (like in the case of Anosov contact flows), we have $r_ue_u=[e_u,X]$.  Similar statements hold for $E^s$.

(2) for $v\in E^u$, we have $||X^t_*(v)||=||v||e^{\int_0^t r_u(\tau)\ d\tau}$. In particular, if $\gamma$ is a periodic orbit of $X$ with period $T$, the eigenvalue of the first return map of $X$ along $\gamma$ corresponding to $E^u$ has magnitude $e^{\int_0^T r_u(\tau)\ d\tau}$, where $r_u(\tau):=r_u\circ X^\tau$. Similar statements hold for $E^s$.

(3) if $\mathcal{L}_{X_\alpha} \alpha_u=r_u\alpha_u$ and $\tilde{\alpha}_u=e^f \alpha_u$ for some function $f$, then the expansion rate corresponding to $\tilde{\alpha}_u$, i.e. the function $\tilde{r}_u$ satisfying $\mathcal{L}_{X_\alpha} \tilde{\alpha}_u=\tilde{r}_u\tilde{\alpha}_u$, can be computed as $\tilde{r}_u=r_u+X\cdot f$. Similar statements hold for $\alpha_s$.
\end{proposition}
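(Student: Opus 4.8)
The three assertions are bookkeeping identities tying together the Lie bracket $[e_u,X]$, the pushforward $X_*^t$, and the logarithmic-derivative definition of the expansion rate, so the plan is to extract each one from the infinitesimal description of the induced flow on $TM/\langle X\rangle$. Throughout I would work in the quotient bundle, where the dominated splitting makes $E^u$ a genuine $\tilde{X}_*^t$-invariant line, and only at the end transfer back to $TM$.

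For (1) I would begin from invariance of the line $\langle\tilde{e}_u\rangle$: since the splitting is $\tilde{X}_*^t$-invariant, $\tilde{X}_*^t(\tilde{e}_u(p))=c(t)\,\tilde{e}_u(X^t(p))$ for a scalar $c$ with $c(0)=1$, and because $\tilde{e}_u$ is a unit field the definition of the expansion rate gives exactly $\frac{d}{dt}\big|_{t=0}\ln|c(t)|=c'(0)=r_u$. Lifting to $TM$, the only ambiguity is the flow direction, and $X$ is invariant under its own flow, so the chosen lift satisfies $X_*^t(e_u(p))=c(t)\,e_u(X^t(p))+b(t)\,X(X^t(p))$ with $c(0)=1$, $b(0)=0$. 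Feeding this into the standard identity $[X,e_u]=\mathcal{L}_X e_u=\frac{d}{dt}\big|_{t=0}X_*^{-t}\big(e_u(X^t(p))\big)$ — after inverting the displayed relation, using $X_*^{-t}(X(X^t(p)))=X(p)$, and differentiating at $t=0$ — yields $[X,e_u]=-c'(0)e_u-b'(0)X$, i.e. $[e_u,X]=r_ue_u+q_uX$ with $q_u:=b'(0)$. For the second clause, when the lift can be taken inside an honest $X_*^t$-invariant line bundle of $TM$ (for instance $E^{uu}=\ker\alpha\cap E^u$ in the contact case), the pushforward $X_*^t e_u$ never acquires a flow component, so $b\equiv 0$ and $q_u=0$, giving $[e_u,X]=r_ue_u$. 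The statements for $E^s$ are identical.

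For (2) I would observe that the infinitesimal definition of $r_u$, combined with the cocycle property $\tilde{X}_*^{t+s}=\tilde{X}_*^s\circ\tilde{X}_*^t$ and the fact that the log-derivative at time $0$ of any nonzero multiple of $\tilde{e}_u$ equals the expansion rate at the base point, says precisely $\frac{d}{dt}\ln\|\tilde{X}_*^t(v)\|=r_u\circ X^t$. This is a scalar linear ODE whose solution is $\|\tilde{X}_*^t(v)\|=\|v\|\exp\!\int_0^t r_u(\tau)\,d\tau$. Specializing to a periodic orbit of period $T$, the return map preserves $E^u$ and its eigenvalue there has modulus $c(T)=\exp\!\int_0^T r_u(\tau)\,d\tau$ by the same computation. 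Part (3) is then a one-line Leibniz calculation: $\mathcal{L}_X(e^f\alpha_u)=(X\cdot f)e^f\alpha_u+e^f\mathcal{L}_X\alpha_u=(r_u+X\cdot f)e^f\alpha_u$, so the rescaled form has expansion rate $\tilde{r}_u=r_u+X\cdot f$.

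The routine part is really (2) and (3); the only genuine care lies in (1) and its interface with the quotient. I expect the main obstacle to be verifying that the log-derivative computed on $TM/\langle X\rangle$ with the induced norm agrees with the definition of $r_u$, and confirming that the flow direction is the \emph{only} extra freedom in a lift, so that $q_u$ vanishes exactly when a bona fide invariant line bundle exists in $TM$. I would also double-check the sign convention in the $\mathcal{L}_X$-versus-pushforward formula, since an error there would flip $r_ue_u=[e_u,X]$ into a spurious $[X,e_u]$.
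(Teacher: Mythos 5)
Your proof is correct, and all three sign conventions check out against the paper's usage (e.g.\ Lemma~\ref{cpa}, where $[e_s,X]=-\lambda e_s=r_se_s$). The paper itself offers no proof of Proposition~\ref{anosovprop} --- it only cites \cite{hoz3} for these facts --- and your computation (invariance of the line giving $X_*^t e_u=c(t)e_u+b(t)X$ with $r_u=c'(0)$, the Lie-derivative formula for part (1), the scalar ODE for part (2), and the Leibniz rule for part (3)) is precisely the standard verification one would expect to find there.
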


It is easy to see that for a projectively Anosov flow, we have $r_u>r_s$ and Anosovity is equivalent to the existence of a metric with respect to which $r_u>0>r_s$.

Now in the presence of a contact metric $g$ for an Anosov contact form, applying Proposition~\ref{sec}, we have

\begin{proposition}\label{cex}
For any $v\in E^{uu}$ and $v\in E^{ss}$, we have
$$II(v,v)=-g(\nabla_{v} X,v)=-||v||^2r_u$$
and
$$II(u,u)=-g(\nabla_{u} X,u)=-||u||^2r_s.$$
\end{proposition}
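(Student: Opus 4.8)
The plan is to chain together the second-fundamental-form description of infinitesimal expansion recorded in Proposition~\ref{sec} with the expansion-rate formula of Proposition~\ref{anosovprop}. The starting observation is purely structural: for a contact Anosov flow we have $\ker{\alpha}=E^{ss}\oplus E^{uu}$, so every $v\in E^{uu}$ and every $u\in E^{ss}$ lies in $\ker{\alpha}$. This is exactly the hypothesis needed to apply the formulas of Proposition~\ref{sec}, which are stated for arbitrary vectors in $\ker{\alpha}$.

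First I would record the middle equality $II(v,v)=-g(\nabla_v X,v)$, which is immediate from the definition $II(w,w)=\alpha(\nabla_w w)=-g(\nabla_w X,w)$ computed in the text, using $g(X_\alpha,\cdot)=\alpha(\cdot)$ together with $g(X_\alpha,w)=0$ for $w\in\ker{\alpha}$. This part involves no dynamical input and holds for any contact metric.

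Next, for the outer equality I would invoke Proposition~\ref{sec}(1), which gives $II(v,v)=-\|v\|^2\,\frac{\partial}{\partial t}\ln\|X_*^t(v)\|\big|_{t=0}$, and then identify the logarithmic derivative with the expansion rate $r_u$. By Proposition~\ref{anosovprop}(2) we have $\|X_*^t(v)\|=\|v\|e^{\int_0^t r_u(\tau)\,d\tau}$ for $v\in E^u$, so differentiating $\ln\|X_*^t(v)\|$ at $t=0$ returns exactly $r_u$. Substituting yields $II(v,v)=-\|v\|^2 r_u$, and the argument for $u\in E^{ss}$ is verbatim with $r_s$ replacing $r_u$.

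The one point requiring care, and the main obstacle, is the compatibility of the norms used in the two propositions: the expansion rate $r_u$ is defined via the induced flow $\tilde X_*^t$ on the quotient $TM/\langle X\rangle$, whereas Proposition~\ref{sec}(1) computes $\|X_*^t(v)\|$ directly in $TM$. These coincide in the present setting because $E^{uu}$ is invariant under $X_*^t$ and contained in $\ker{\alpha}$, which for a contact metric is $g$-orthogonal to $X_\alpha$; hence $X_*^t(v)$ never acquires a component along $X_\alpha$, the quotient projection restricts to an isometry on $E^{uu}$, and the two notions of $\|X_*^t(v)\|$ agree. Once this identification is justified, the proposition follows by concatenating the three displayed equalities.
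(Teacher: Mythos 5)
Your proposal is correct and follows essentially the same route as the paper, which simply states the result as an application of Proposition~\ref{sec} combined with the definition of the expansion rates. Your extra care about identifying the quotient norm on $TM/\langle X\rangle$ with the restriction of $g$ to $E^{uu}\subset\ker{\alpha}$ (using invariance of $E^{uu}$ and $g$-orthogonality of $\ker{\alpha}$ and $X_\alpha$) is a detail the paper leaves implicit, and you resolve it correctly.
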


These expansion rates capture the local geometry of a Anosov contact flow. But using standard ergodic theory, such interaction transcends to a global one involving {\em entropy} of the Reeb flow, considering the fact that Anosov contact flows are mixing and ergodic with respect to the smooth invariant volume form $\alpha \wedge d\alpha$. Recall that the {\em measure entropy} is a quantity associated with a dynamical system which describes the amount of chaotic behavior with respect to a measure. See Appendix~A of \cite{hyp} for a formal introduction and basic properties.

Mainly, we are interested in {\em Pesin's entropy formula}, which states that the integral of the unstable expansion rate with respect to a smooth invariant probability measure yields its measure entropy. Such invariant probability measure is sometimes called the {\em Liouville measure} and its entropy is called the {\em Liouville entropy}. Of course, we can use $\alpha \wedge d\alpha/||\alpha \wedge d\alpha||$ as the Liouville measure on $M$. See Chapter~7 of \cite{hyp} for thorough discussion on this and more on measure entropy (note that their definition of expansion rate and the formula below differs from ours in a sign).

\begin{theorem}[Pesin's entropy formula] 
Assume $X$ generates a mixing (in particular, contact) Anosov flow and $d\mu$ an invariant smooth probability measure. Then,
$$\mathtt{h}_\mu(X)=\int_M r_u d\mu,$$
where $h_\mu(X)$ is the measure entropy of $\mu$ under the flow.
\end{theorem}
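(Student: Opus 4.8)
The plan is to recognize this as the specialization to contact Anosov $3$-flows of the general \emph{Pesin entropy formula} $\mathtt{h}_\mu(X^1)=\int_M \chi^+\, d\mu$, where $\chi^+$ denotes the sum of the positive Lyapunov exponents of the time-one map $X^1$, and then to identify $\chi^+$ with $r_u$. First I would reduce the flow to its time-one map: since $\mu$ is invariant and a probability, Abramov's formula gives $\mathtt{h}_\mu(X)=\mathtt{h}_\mu(X^1)$. In the splitting $TM=E^{ss}\oplus\langle X\rangle\oplus E^{uu}$ the flow direction has zero exponent and $E^{ss}$ carries only negative exponents, so the unique positive exponent lives on the line $E^{uu}$. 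By Proposition~\ref{anosovprop}(2), for $v\in E^{uu}$ we have $\|X^t_*(v)\|=\|v\|\,e^{\int_0^t r_u\circ X^\tau \, d\tau}$, so the Lyapunov exponent at $x$ equals $\lim_{t\to\infty}\frac{1}{t}\int_0^t r_u\circ X^\tau \, d\tau$; by the Birkhoff ergodic theorem this limit exists $\mu$-a.e., and invariance of $\mu$ gives $\int_M \chi^+\, d\mu=\int_M r_u\, d\mu$. Hence the general formula becomes exactly the stated identity, and it remains to establish the two matching inequalities for $f:=X^1$.

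The upper bound $\mathtt{h}_\mu(X^1)\le \int_M r_u\, d\mu$ is \emph{Ruelle's inequality} and holds for every invariant measure, smooth or not. I would prove it by covering $M$ with a fine grid of small cubes and estimating, via the chain rule applied to $f^n$, the number of cubes needed to $(n,\epsilon)$-separate orbit segments. Since $f^n$ expands length in the unstable line at rate at most $\exp\!\big(\sum_{j=0}^{n-1}\chi^+\circ f^j\big)$ and contracts transversally, the exponential growth rate of distinguishable orbit segments is bounded above by $\int_M \chi^+\, d\mu$, and the definition of measure entropy yields the inequality.

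The lower bound $\mathtt{h}_\mu(X^1)\ge \int_M r_u\, d\mu$ is the substantive half and is where smoothness of $\mu$ enters decisively. I would choose a measurable partition $\mathcal{P}$ \emph{subordinate to the unstable foliation}, so that each atom is a precompact open piece of a strong unstable manifold and $f^{-1}\mathcal{P}$ refines $\mathcal{P}$. For such a partition one has $\mathtt{h}_\mu(f,\mathcal{P})=H_\mu(\mathcal{P}\mid f\mathcal{P})$, and disintegrating $\mu$ along its atoms (unstable plaques) reduces this conditional entropy to the integral of the logarithm of the unstable Jacobian, $\int_M \ln\big|\det\big(Df|_{E^{uu}}\big)\big|\, d\mu=\int_M\!\int_0^1 r_u\circ X^\tau \, d\tau\, d\mu=\int_M r_u\, d\mu$, where the last step again uses invariance of $\mu$. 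The key input making this computation valid is that the disintegration of $\mu$ along unstable leaves is equivalent to Riemannian leaf volume, with Radon--Nikodym derivative controlled by products of unstable Jacobians.

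The main obstacle is precisely this last point: the \emph{absolute continuity of the unstable foliation} with respect to $\mu=\alpha\wedge d\alpha/Vol(\alpha\wedge d\alpha)$. In the uniformly hyperbolic setting I would invoke the Anosov--Sinai absolute continuity theorem, which guarantees that holonomy along the unstable foliation carries leaf-null sets to leaf-null sets, so that the conditional measures have the expected densities. This is the only step that genuinely requires $\mu$ to be smooth---Ruelle's inequality and the Lyapunov-exponent identification hold in full generality---and it is exactly what upgrades the general upper bound to Pesin's equality.
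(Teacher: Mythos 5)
The paper does not prove this statement: it is imported verbatim from the literature (the author points to Chapter~7 of \cite{hyp}), so there is no in-paper argument to compare against. Your outline is the standard proof of Pesin's formula in the uniformly hyperbolic, smooth-invariant-measure setting (essentially Sinai's argument, later generalized by Pesin and by Ledrappier--Strelcyn), and it is correct as a plan: Abramov/time-one reduction, identification of the unique positive exponent with the Birkhoff average of $r_u$ via Proposition~\ref{anosovprop}(2) (which matches the paper's own Corollary~\ref{pesincor}), Ruelle's inequality for $\le$, and subordinate partitions plus Anosov--Sinai absolute continuity for $\ge$. Two places where your sketch leans on nontrivial machinery that you should at least cite rather than wave at: (i) the entropy theory of \emph{measurable} (uncountable) partitions subordinate to $W^{uu}$ --- the identities $h_\mu(f,\mathcal{P})=H_\mu(\mathcal{P}\mid f\mathcal{P})$ and, more delicately, the fact that this conditional entropy is bounded above by $h_\mu(f)$, require Rokhlin's disintegration theory and an approximation argument by finite partitions; and (ii) the computation of $H_\mu(\mathcal{P}\mid f\mathcal{P})$ as the integrated log-unstable-Jacobian requires not just absolute continuity of the conditionals but the explicit form of their densities (the usual infinite product of Jacobian ratios along backward orbits). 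Both are standard and available in \cite{hyp} or in Ledrappier--Young, so the proposal stands, but as written it is a correct roadmap rather than a self-contained proof.
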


To understand the local implication of the entropy theorem, we need the Birkhoff ergodic theorem for Anosov flows. See \cite{hyp}.

\begin{theorem}[Birkhoff ergodic theorem for Anosov flows] 
Let $X$ be a volume preserving (in particular, contact) Anosov flow with invariant smooth measure $d\mu$. There exist a subset $\mathcal{R}\subset M$ (called the set of regular points) such that $M \backslash \mathcal{R}$ has measure zero and for any $x \in \mathcal{R}$ and $\mu$-integrable function $f:M\rightarrow \mathbb{R}$,
$$\lim_{T\rightarrow \
\infty}\frac{1}{T} \int_0^T f(X^t(x))\ dt=\frac{1}{Vol(\mu)}\int_M f \ d\mu.$$
\end{theorem}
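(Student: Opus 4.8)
The plan is to deduce this from the abstract pointwise ergodic theorem together with the ergodicity of volume-preserving Anosov flows, so that the only genuinely dynamical input is cited rather than reconstructed. First I would invoke the continuous-time Birkhoff--Khinchin pointwise ergodic theorem for the measure-preserving flow $X^t$ acting on the probability space $(M,\mu/Vol(\mu))$: for any $\mu$-integrable $f$, the time average $\bar{f}(x):=\lim_{T\to\infty}\frac{1}{T}\int_0^T f(X^t(x))\,dt$ exists for $\mu$-almost every $x$, and the limit function $\bar{f}$ is $\mu$-integrable and flow-invariant, i.e. $\bar{f}\circ X^s=\bar{f}$ for all $s$ wherever defined. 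The passage from the discrete Birkhoff theorem (applied to the time-one map and the averaged observable $F(x):=\int_0^1 f(X^t(x))\,dt$) to the continuous average is a routine interpolation. This step already produces the set $\mathcal{R}$ of regular points as the full-measure set on which the limit exists, so that $M\setminus\mathcal{R}$ is $\mu$-null.

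The second ingredient is ergodicity. Since $X$ generates a volume-preserving Anosov flow, it is ergodic with respect to the smooth invariant measure $\mu$; this is the classical Anosov--Sinai theorem, established through the Hopf argument and the absolute continuity of the stable and unstable foliations (see \cite{hyp}, and note the mixing/ergodicity remark already recorded above for the contact case). Ergodicity means precisely that every flow-invariant $\mu$-measurable function is constant $\mu$-almost everywhere. Applying this to the invariant function $\bar{f}$, I conclude that $\bar{f}\equiv c$ for some constant $c$ off a $\mu$-null set.

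Finally I would identify the constant by integrating. Because each $X^t$ preserves $\mu$, invariance of the measure under the flow together with the dominated convergence theorem (or Fubini applied to the defining integral) gives $\int_M \bar{f}\,d\mu=\int_M f\,d\mu$. Hence $c\cdot Vol(\mu)=\int_M f\,d\mu$, so $c=\frac{1}{Vol(\mu)}\int_M f\,d\mu$, which is exactly the asserted formula.

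The main obstacle is the ergodicity step: the pointwise convergence and the identification of the constant are soft measure-theoretic facts that hold for any ergodic measure-preserving flow, whereas the ergodicity of the smooth invariant measure for an Anosov flow is the substantive dynamical theorem, and it is the part that genuinely uses the hyperbolic structure. In the present exposition it is legitimately invoked as a known result from \cite{hyp} rather than reproved, so the remaining work is entirely the standard measure-theoretic bookkeeping described above.
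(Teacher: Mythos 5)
Your argument is correct and is exactly the standard proof that the paper itself does not spell out but delegates to \cite{hyp}: the pointwise Birkhoff--Khinchin theorem, ergodicity of the smooth invariant measure for volume-preserving Anosov flows via the Hopf argument, and identification of the constant by integration. The only caveat is that the full-measure set your argument produces depends on the observable $f$, whereas the statement as written asserts a single $\mathcal{R}$ valid for all $\mu$-integrable $f$ (one set suffices for all continuous observables, but not for all of $L^1$); this is harmless for the paper's purposes, since the result is only ever applied to the fixed function $r_u$.
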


Combining the two theorems implies that for almost every point on $M$ the {\em forward Lyapunov exponent }corresponding to $E^{uu}$, defined using an arbitrary vector $v\in E^{uu}$ as

$$\bar{r}_u(x):=\lim_{T\rightarrow \
\infty}\frac{1}{T} \ln{||X^T_*(v)||}=\lim_{T\rightarrow \
\infty}\frac{1}{T} \int_0^T r_u(X^t(x))\ dt,$$
is equal to the Liouville entropy of $X$.

Therefore, the forward entropy is defined almost everywhere and only depends on the measure entropy of the flow. The same holds for the {\em backward} Lyapunov exponents. Note that in particular, the forward and backward Lyapunov exponents are equal almost everywhere. The same is true for the Lyapunov exponents of the stable invariant bundle $E^{ss}$. This is essentially embedded in the Oseledet's Multiplicative Ergodic Theorem \cite{os} (also see \cite{simic2}).

\begin{corollary}\label{pesincor}
There exists a subset $\mathcal{R}\subset M$ such that $M\backslash \mathcal{R}$ has measure zero, and for any $x\in \mathcal{R}$
$$\bar{r}_u(x)=\lim_{T\rightarrow \
\infty}\frac{1}{T} \int_0^T r_u(X^t(x))\ dt=\int_M r_u \ \frac{d\mu}{Vol(\mu)}=\mathtt{h}_{\frac{\mu}{Vol(\mu)}}=\frac{\mathtt{h}_\mu}{Vol(\mu)},$$
where $\bar{r}_u(x)$ is the Lyapunov exponent of $E^{uu}$ at $x\in M$. A similar statement holds for $\bar{r}_s(x)$, i.e. the Lyapunov exponent of $E^{ss}$.
\end{corollary}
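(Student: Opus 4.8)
The plan is to assemble the stated corollary directly from the two theorems just recorded—the Birkhoff ergodic theorem and Pesin's entropy formula—by identifying the forward Lyapunov exponent with a Birkhoff time average of the expansion rate $r_u$. No new analytic input is needed; the content is the synthesis of the two cited results.

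First I would observe that $r_u$ is a continuous function on the closed manifold $M$ (for a suitable metric it is even $C^1$ along the flow, but continuity suffices), hence bounded and $\mu$-integrable. I would then apply the Birkhoff ergodic theorem with $f=r_u$ to obtain a full-measure set $\mathcal{R}\subset M$ on which the time average $\frac{1}{T}\int_0^T r_u(X^t(x))\,dt$ converges to the space average $\frac{1}{Vol(\mu)}\int_M r_u\,d\mu$.

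The next step is to recognize this time average as the Lyapunov exponent. By Proposition~\ref{anosovprop}(2), for any $v\in E^{uu}$ we have $\|X_*^t(v)\|=\|v\|\,e^{\int_0^t r_u(X^\tau(x))\,d\tau}$, so that $\frac{1}{T}\ln\|X_*^T(v)\|=\frac{1}{T}\ln\|v\|+\frac{1}{T}\int_0^T r_u(X^t(x))\,dt$. Since $\frac{1}{T}\ln\|v\|\to 0$ as $T\to\infty$, the limit defining $\bar{r}_u(x)$ exists exactly on $\mathcal{R}$ and coincides there with the Birkhoff average; in particular $\bar r_u$ is independent of the chosen $v\in E^{uu}$. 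Combining with Pesin's entropy formula $\int_M r_u\,d\mu=\mathtt{h}_\mu(X)$, together with the normalization convention $\mathtt{h}_{\mu/Vol(\mu)}=\mathtt{h}_\mu/Vol(\mu)$ for the rescaled probability measure, yields the full chain of equalities in the statement. The stable assertion follows by the identical argument applied to $r_s$ (equivalently, to the time-reversed flow, for which $E^{ss}$ plays the role of the unstable bundle).

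The only subtlety—rather than a genuine obstacle—is the clean passage from the almost-everywhere statement of Birkhoff to the pointwise identification of $\bar r_u(x)$ with the entropy on $\mathcal{R}$: one must check that the null set discarded by Birkhoff is precisely where the Lyapunov exponent may fail to be defined, and that the integrability hypotheses of both cited theorems are satisfied by $r_u$. Both are immediate from continuity of $r_u$ and compactness of $M$, so the proof reduces to bookkeeping once the formula of Proposition~\ref{anosovprop}(2) is invoked.
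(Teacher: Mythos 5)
Your proposal is correct and follows essentially the same route as the paper: the paper likewise obtains the corollary by applying the Birkhoff ergodic theorem to $f=r_u$, identifying the resulting time average with the forward Lyapunov exponent via the integral formula $||X^t_*(v)||=||v||e^{\int_0^t r_u(\tau)\,d\tau}$ of Proposition~\ref{anosovprop}(2), and then invoking Pesin's entropy formula for the space average. The only caveat is your phrase that the limit exists \emph{exactly} on $\mathcal{R}$ --- it may also exist off $\mathcal{R}$ (e.g.\ on periodic orbits, where it generally differs from the entropy) --- but this does not affect the statement, which only concerns points of $\mathcal{R}$.
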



\section{Important examples}\label{s4}

In this section, we would like to review two important classes of critical contact metrics. These are {\em Boothby-Wang fibrations}, corresponding to $S^1$-bundles over surfaces, and {\em examples with $\widetilde{SL}(2;\mathbb{R})$ geometry}, corresponding to the geodesic flows on the unit tangent space of hyperbolic surfaces. One of the main results of this paper, discussed in Section~\ref{s5}, is that in fact these are essentially the only examples of critical contact metrics in dimension 3. Both classes of examples can be generalized to higher dimensions as well. But we only consider them in dimension 3 in this paper.

\subsection{Boothby-Wang fibrations}\label{s41}

Boothby-Wang fibration were introduced in \cite{bw} and their significance in contact metric geometry is thanks to the fact that they characterize vanishing torsion of contact metrics. See Chapter~3 of \cite{bl} for more details.  On an oriented closed surface $\Sigma$, there is a 1-to-1 correspondence between elements of $H^2(\Sigma;\mathbb{Z})$ and $\mathbb{S}^1$-bundles over $\Sigma$. In particular, an element $[\omega]>0\in H^2(\Sigma;\mathbb{Z})$ corresponds to an $\mathbb{S}^1$-bundle $\pi:(M_{(\Sigma,[\omega])},\alpha)\rightarrow \Sigma$, whose fibers are the Reeb flow lines for the connection form $\alpha$ satisfying $d\alpha=\pi^*\omega$, which is a positive contact form in this case.


	Given a Boothby-Wang fibration $(M_{(\Sigma,[\omega])},\alpha)$, it is easy to construct an almost complex structure $\phi$ which is invariant under $X_\alpha$ (i.e. the $\mathbb{S}^1$-action) by starting with an almost complex structure on $(\Sigma,\omega)$ and lifting it along the fibers, i.e. $\mathcal{L}_{X_\alpha}\phi=0$ (see \cite{bl} Example~4.5.4). In particular, $X_\alpha$ is a Killing vector field for $g$.
i.e.
$$\mathcal{L}_{X_\alpha}g=0.$$
Such metric trivially minimizes the Dirichlet energy functional as it satisfies $\mathcal{E}(g)=0$. More interestingly, modifications of these examples give {\em all} contact metrics with $\mathcal{E}(g)=0$. 

Generalizing the above construction to a symplectic orbifold $(\Sigma,\omega)$, we still get a $\mathbb{S}^1$-action on a 3-manifold whose orbits induce a Seifert fibration traced by a Reeb flow. These examples are called {\em generalized Boothby-Wang fibrations} and they are known to be quotients of Boothby-Wang fibrations \cite{thomas}.


Finally, Rukimbira \cite{ruk} shows that any contact metric with $\mathcal{E}(g)=0$ can be approximated by a generalized Boothby-Wang fibration, using the following argument: In this case, $X^t_\alpha$ induces a 1-parameter family of isometries of $g$, whose closure is a compact Abelian Lie subgroup of the space of all isometries, and hence, a (possibly high dimensional) torus. Therefore, the action of $X_\alpha$ can be approximated by an action with only periodic orbits (similar to approximation of a foliation of 2-torus with irrational slope by ones with rational slopes). Rukimbira shows that this action is in fact induced from a nearby Reeb flows and gives explicit examples of such approximations. We gather all these observations in the following theorem.

\begin{theorem}\label{bw}
Let $(M,\alpha)$ be a generalized Boothby-Wang fibration. Then, $(M,\alpha)$ admits a contact metric with $\mathcal{E}(g)=0$. Conversely, if $g$ is a contact metric adapted to $(M,\alpha)$ with $\mathcal{E}(g)=0$. Then, $(M,\alpha)$ can be approximated by generalized Boothby-Wang fibrations.
\end{theorem}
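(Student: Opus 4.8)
The plan is to reduce the whole statement to one clean equivalence: $\mathcal{E}(g)=0$ holds if and only if $X_\alpha$ is a Killing field for $g$. Since $\mathcal{E}(g)=\tfrac12\int_M \|\tau\|^2\,dVol(g)$ is the integral of a nonnegative continuous function, it vanishes exactly when $\tau=\mathcal{L}_{X_\alpha}g\equiv 0$, and as $\|\tau\|=2\|h\|$ this is also equivalent to $h\equiv 0$. I would record this equivalence at the outset and then establish the two implications separately, one by explicit construction and one by an isometry-group argument.

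For the forward direction, I would build an invariant contact metric directly on a generalized Boothby-Wang fibration $\pi:M_{(\Sigma,[\omega])}\to\Sigma$ with connection form $\alpha$, $d\alpha=\pi^*\omega$, whose Reeb field $X_\alpha$ generates the $\mathbb{S}^1$-fibers. Starting from a compatible almost complex structure $J$ on the symplectic surface (or orbifold) $(\Sigma,\omega)$ — which always exists — I would lift it to a $(1,1)$-tensor $\phi$ on $\xi=\ker\alpha$ by declaring $\pi_*\circ\phi=J\circ\pi_*$ on the horizontal distribution and $\phi(X_\alpha)=0$. Because $J$, $\omega$, and the connection are all pulled back from the base, and the $\mathbb{S}^1$-action preserves the horizontal distribution, $\phi$ is invariant under the fiberwise flow, so $\mathcal{L}_{X_\alpha}\phi=0$. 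Combined with $\mathcal{L}_{X_\alpha}\alpha=0$ and $\mathcal{L}_{X_\alpha}d\alpha=0$, the associated contact metric $g$ satisfies $\mathcal{L}_{X_\alpha}g=0$, hence $\mathcal{E}(g)=0$. The orbifold (almost regular) case is handled by the structure theorem of Thomas \cite{thomas} together with \cite{blc}, where the same construction carries over to the symplectic orbifold.

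For the converse, suppose $\mathcal{E}(g)=0$, so that $X_\alpha$ is Killing. Since $M$ is closed, $\mathrm{Isom}(M,g)$ is a compact Lie group, and I would take $G$ to be the closure of the one-parameter subgroup $\{X_\alpha^t\}$ inside it. Then $G$ is a compact connected abelian Lie group, i.e. a torus $T^k$, and $X_\alpha$ corresponds to a vector $v$ in its Lie algebra $\mathfrak{t}$. The plan is to approximate $v$ by a rational vector $v'$ generating a circle subgroup — precisely the approximation of an irrational line in a torus by rational slopes — so that the corresponding isometric flow has all orbits closed. The final and most delicate step, which is the substance of Rukimbira's argument \cite{ruk}, is to promote this nearby periodic isometric flow to the Reeb flow of a contact form $\alpha'$ that is $C^\infty$-close to $\alpha$; the all-periodic Reeb flow then exhibits $(M,\alpha')$ as a generalized Boothby-Wang (Seifert) fibration via Boothby-Wang \cite{bw} and Thomas \cite{thomas}, completing the approximation. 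I expect this last step to be the main obstacle, since one must control the contact form itself under the perturbation rather than merely perturbing the flow abstractly, and must account for the possible singular fibers that push the conclusion into the generalized (almost regular) setting rather than the honestly regular one.
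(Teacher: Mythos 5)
Your proposal is correct and follows essentially the same route as the paper: the forward direction by lifting a compatible almost complex structure from the symplectic base (orbifold) to obtain an $X_\alpha$-invariant $\phi$ and hence a Killing Reeb field, and the converse by taking the torus closure of $\{X_\alpha^t\}$ in the isometry group, approximating by a circle subgroup, and invoking Rukimbira \cite{ruk} (with \cite{bw}, \cite{thomas}, \cite{blc}) for the passage back to a nearby Reeb flow. You also correctly identify that the genuinely delicate step is deferred to Rukimbira's theorem, exactly as the paper does.
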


\subsection{Examples with $\widetilde{SL}(2,\mathbb{R})$-geometry}\label{s42}

Geodesic flows of a hyperbolic surfaces $\Sigma$ considered on the unit tangent space $UT\Sigma$ provide a rich class of examples for many areas of mathematics. In particular, they yield the prototypical examples of Anosov systems, classical examples of Reeb flows in contact geometry, and their significance in Riemannian geometry partly relies on the fact that they admit a $\widetilde{SL}(2,\mathbb{R})$ algebra. The theory of contact metric geometry provides a context in which all these areas of mathematics interact, and in fact coincide in the case of critical contact metrics. See \cite{pat} for a thorough reference, \cite{katgeo} for a dynamical approach, \cite{geiges,mcduff} for a contact geometric treatment and \cite{bl,torsion} for appearances in contact metric geometry.

The manifolds with $\widetilde{SL}(2,\mathbb{R})$-geometry are characterized by the existence of a {\em $SL_2$-triple}. That is a basis $(X,Y,Z)$ for the tangent space $TM$, satisfying

$$[Y,Z]=2X, \ \ \ \ [Z,X]=-\lambda Y, \ \ \ \ [X,Y]=\lambda Z,$$
for some constant $\lambda>0$.

We note that the last condition implies that the plane field $\langle X,Y\rangle$ is a contact form and for the 1-form $\alpha$ defined by $\alpha(Y)=\alpha(Z)=0$ and $\alpha(X)=1$, the Reeb vector field is in fact $X_\alpha=X$. Employing the almost complex structure $\phi$ defined by $\phi X=0$, $\phi Y=Z$ and $\phi Z=-Y$, we get a contact metric, which can be explicitly seen to satisfy the Tanno's equation, and hence, is a critical contact metric \cite{torsion}. Such examples are for instance present on the unit tangent space of hyperbolic surfaces, where $X$ generates the geodesic flow of such surface and $Y$ generates the $\mathbb{S}^1$-fibers.

Furthermore, we observe
 that $X$ is in fact an Anosov Reeb flow with $E^{ss}:=\langle Y+Z\rangle$, $E^{uu}:=\langle Y-Z\rangle$ and constant expansion rates $r_u=-r_s=\lambda>0$.
In particular, in the case of surfaces with constant negative curvature $K<0$, this equation can be seen to yield $r_u=-r_s=\sqrt{-K}$.

Anosov contact flows which are virtually smoothly conjugate to these examples are called {\em algebraic} Anosov contact flows, indicating the Lie algebraic context. 
As mentioned in more details in the introduction, algebraic Anosov contact flows are exceptional among Anosov contact flows, as they are unique at satisfying many important rigidity properties. 
Along these lines, Theorem~\ref{cca} of the next section can be interpreted new rigidity result for algebraic Anosov contact flows in terms of contact metric geometry. These are the only Anosov contact flows admitting a critical contact metric. It should be noted that it is well known (see \cite{torsion,blc,abb}) that such examples admit a critical contact metric and we also have the classification result of \cite{abb} for contact metrics on such unit tangent spaces.

\section{Critical contact metrics and the Chern-Hamilton question}\label{s5}

In this section, we give a complete classification of contact manifolds admitting critical contact metrics in dimension 3. This relies on the interplay of critical contact metrics and Anosov dynamics. In the following, we slightly extend the notion of a flow being (projectively) Anosov, by calling a flow (projectively) Anosov on an invariant set $\Lambda\subseteq M$, if $TM|_\Lambda$ admits a splitting as in Definition~\ref{anosov} (Definition~\ref{ca}).

Consider a contact manifold $(M,\alpha)$ with Reeb vector field $X$ and  equipped with a critical metric and let $\Lambda \subseteq M$ be the subset of $M$, on which the associated torsion $\tau$ is non-vanishing (by Proposition~\ref{per}~(a), $\Lambda$ is invariant under the flow of $X$). Now, suppose $(e_1,e_2=\phi e_1)$ is an orthonormal basis for $\ker{\alpha}|_{\Lambda}$ such that $h(e_1)=\lambda e_1$ and $h(e_2)=-\lambda e_2$, where $2\lambda=||\tau||>0$. As discussed in Proposition~\ref{per}, we know $\xi_+:=\langle e_1,X\rangle$ and $\xi_-:=\langle e_2,X\rangle$ are positive and negative contact structures, respectively, transversely intersecting along $X$. Therefore, by \cite{conf,mitsumatsu} $X$ is projectively Anosov on $\Lambda$. Below, we show that $X$ in fact Anosov on $\Lambda$ with expansion and contraction rates equal to $\lambda>0$ and $-\lambda<0$, respectively.


\begin{lemma}\label{cpa}
Using the above notation, $X$ is Anosov on $\Lambda$, on which the plane fields $E^s:=Span(e_1+e_2,X)$ and $E^u:=Span(e_2-e_1,X)$ are invariant under $X$ with expansion rates $r_s=-\lambda$ and $r_u=\lambda$.
\end{lemma}

\begin{proof}
We can compute 
$$[X,e_1+e_2]=[X,e_1]+[X,e_2]=\nabla_X e_1 +\nabla_X e_2 -\nabla_{e_1}X-\nabla_{e_2}X$$
$$=-e_2+e_1+(1+\lambda)e_2-(1-\lambda)e_1=\lambda (e_1+e_2),$$
where we have used Perrone's observations discussed in Proposition~\ref{per}. Similarly, we can show $[X,e_2-e_1]=-\lambda (e_2-e_1)$.
\end{proof}

Our next lemma indicates that the scalar torsion is constant in the presence of a critical contact metric. This means that the invariant set $\Lambda$ in the previous lemma is either empty, or the entire $M$, in which case $X$ is Anosov on the entire $M$.

\begin{lemma}\label{constant}
Let $(M,\alpha,g)$ be critical contact metric. Then, the scalar torsion $||\mathcal{L}_X g||$ is constant. 
\end{lemma}

\begin{proof}
Assume $\lambda$ is not constant. By Sard's theorem, $\lambda:M\rightarrow \mathbb{R}$ has a regular value $c>0$. Let $\Sigma=\lambda^{-1}(c)$ be such smooth compact pre-image. By Proposition~\ref{per}, we have $X \cdot \lambda =0$ and therefore, the surface $\Sigma$ is invariant along the flow generated by $X$. In particular, this means that $\Sigma$ has vanishing Euler number, implying that it is a torus or Klein bottle. This is not possible due to $X$ being Anosov on $\Sigma$ which was shown in Lemma~\ref{cpa}. In particular, this means that $TM|_{\Sigma}$ has an Anosov splitting (as in Definition~\ref{anosov}). But $\Sigma$ being differentiable and invariant implies that it should be everywhere tangent to either $E^u$ or $E^s$ (and therefore a leaf of the stable or unstable weak foliations). In particular, $X|_\Sigma$ is everywhere expanding or everywhere contracting, which is in contradiction with $\Sigma$ being a closed surface.

\end{proof}

Now that for a critical contact metric, the scalar torsion is proved to be constant, the torsion free case $||\mathcal{L}_Xg|| \equiv 0$ is understood by the work of Rukimbira \cite{ruk} and explained in Section~\ref{s4}, while the case $||\mathcal{L}_Xg||  \equiv 2\lambda >0$ requires appealing to the rigidity results in Anosov dynamics. 



\begin{theorem}\label{cca}
Let $(M,\alpha)$ be an Anosov contact manifold admitting a critical contact metric $g$. Then, $(M,\alpha)$ is smoothly strictly contactomorphic to an algebraic Anosov contact metric.
\end{theorem}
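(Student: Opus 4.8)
The plan is to extract from the critical condition (Tanno's equation, Theorem~\ref{tanno}) the full algebraic structure of the flow, in three stages: identify the invariant bundles in terms of the torsion, prove the scalar torsion is a positive constant, and then invoke Anosov rigidity. First I would set up the frame. Since the flow is Anosov, its unstable Lyapunov exponent is positive, so the torsion cannot vanish identically; hence Perrone's analysis (Proposition~\ref{per}) applies and furnishes the $h$-eigenframe $(e_1,e_2=\phi e_1)$ with $he_1=\lambda e_1$, $he_2=-\lambda e_2$, together with $[X_\alpha,e_1]=\lambda e_2$, $[X_\alpha,e_2]=\lambda e_1$ and $X_\alpha\cdot\lambda=0$. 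From the bracket relations, $\langle e_1-e_2\rangle$ and $\langle e_1+e_2\rangle$ are $X_\alpha$-invariant, and a short computation with Proposition~\ref{sec} and Proposition~\ref{cex} (evaluating $II$ on $e_1\pm e_2$) shows that the associated expansion rates with respect to the contact metric are exactly $r_u=\lambda$ and $r_s=-\lambda$. By uniqueness of the Anosov splitting this identifies $E^{uu}=\langle e_1-e_2\rangle$ and $E^{ss}=\langle e_1+e_2\rangle$. I would note that the identities $r_u=\lambda=\|h\|$ and $X_\alpha\cdot\lambda=0$ persist even at points where $\lambda=0$ (where $h=0$ forces $II|_{\ker\alpha}=0$), so they hold on all of $M$.

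The key step is to prove $\lambda$ is a positive constant. Since $X_\alpha\cdot\lambda=0$, the integrand in the forward Lyapunov exponent is constant along each orbit, so $\bar r_u(x)=\lim_{T\to\infty}\frac1T\int_0^T r_u(X_\alpha^t x)\,dt=\lambda(x)$ at every point $x$. On the other hand, Corollary~\ref{pesincor} (Pesin's formula combined with the Birkhoff ergodic theorem) gives $\bar r_u\equiv\bar{\mathtt h}$, the Liouville entropy, on a set of full measure. Hence $\lambda$ agrees with the constant $\bar{\mathtt h}$ on a dense set, and since $\lambda=\|h\|$ is continuous, $\lambda\equiv\bar{\mathtt h}$ on $M$; positivity of the Anosov Lyapunov exponent gives $\bar{\mathtt h}>0$. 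This is the precise point at which the entropy enters the classification.

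With $\lambda$ a positive constant, the eigenbundles of the smooth tensor $h$ have everywhere-distinct eigenvalues $\pm\lambda$, so $E^{uu}$ and $E^{ss}$ --- and therefore the weak bundles $E^u,E^s$ --- are smooth. I would then invoke the regularity rigidity theorem of Ghys: a contact Anosov $3$-flow with $C^2$ (here smooth) weak bundles is, up to a constant reparametrization, smoothly conjugate to an algebraic contact Anosov flow. Equivalently, one can route the conclusion through Foulon's entropy rigidity: since $r_u\equiv\lambda$ is constant, Ruelle's inequality and the variational principle give topological entropy $\le\lambda$, while it is $\ge\bar{\mathtt h}=\lambda$, so the topological and Liouville entropies coincide and Foulon's theorem again forces the algebraic model. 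Either way, $X_\alpha$ is smoothly conjugate to the Reeb flow of an algebraic Anosov contact manifold, and because $\lambda$ is constant no nontrivial reparametrization is needed.

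The remaining, and most delicate, task is to promote this flow conjugacy to a strict contactomorphism of contact metrics. The conjugating diffeomorphism $\psi$ carries $X_\alpha$ to the algebraic Reeb field and preserves the strong bundles, hence sends $\ker\alpha=E^{ss}\oplus E^{uu}$ to the algebraic contact structure; since $\psi_*X_\alpha$ is the algebraic Reeb field and the contact planes correspond, $\psi^*$ of the algebraic contact form has the same kernel and the same value on $X_\alpha$ as $\alpha$, so it equals $\alpha$ and $\psi$ is a strict contactomorphism. Finally, because the compatible almost complex structure $\phi$ is pinned down by the $h$-eigenframe (with $\lambda$ constant the frame $(e_1,e_2)$ is exactly the $SL_2$-frame $(Y,Z)$ up to normalization, satisfying $[e_1,e_2]=2X_\alpha$), matching frames under $\psi$ identifies the almost complex structures and hence the contact metrics. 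I expect the main obstacle to lie here: ensuring that the smooth conjugacy provided by the rigidity theorem genuinely respects the contact form and the compatible metric, rather than merely the orbit foliation, which is where the constancy of $\lambda$ (no reparametrization) and the rigidity of the $h$-eigenframe must be used with care.
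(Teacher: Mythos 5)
Your proof is correct and follows the same overall architecture as the paper's: Perrone's eigenframe $(e_1,e_2)$ from Tanno's equation, identification of the Anosov splitting with the diagonal bundles $\langle e_1\pm e_2\rangle$ carrying expansion rates $\pm\lambda$, constancy of $\lambda$, and then a rigidity theorem to land on the algebraic model. The two places where you diverge are worth noting. For the constancy of $\lambda$, the paper uses a more elementary argument: $X_\alpha\cdot\lambda=0$ plus transitivity of a contact Anosov flow forces the continuous function $\lambda$ to be constant on a dense orbit and hence everywhere; your route through Corollary~\ref{pesincor} is also valid (and has the virtue of identifying the constant as the Liouville entropy from the start, anticipating Theorem~\ref{inf}), but it is heavier machinery than needed, and it quietly absorbs the paper's first lemma (that $\tau$ is nowhere zero) into the observation that $\bar r_u(x)=\lambda(x)$ is bounded below by the Anosov constant --- a point you should make explicit rather than just asserting that the identities ``persist'' where $\lambda=0$, since the eigenframe is not canonically defined there (the fix, as you hint, is that $h=0$ at a point forces $II|_{\ker\alpha}=0$ and hence $r_u=0$ there, which the uniform expansion estimate rules out). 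For the final step, the paper goes through Green's theorem to obtain $[e_s,e_u]=2X_\alpha$ with constant structure coefficients and then cites Foulon and Geiges--Gonzalo, whereas you invoke Ghys's regularity rigidity (or Foulon via Ruelle's inequality and the variational principle); all of these are legitimate, and your Ruelle-inequality derivation of $\mathtt{h}_{top}=\mathtt{h}_{Liouville}$ from constancy of $r_u$ is a clean alternative. Your closing paragraph on upgrading the flow conjugacy to a strict contactomorphism addresses a point the paper passes over quickly, and your argument there (the conjugacy matches Reeb fields and contact planes, hence pulls back the algebraic contact form to $\alpha$) is the right one.
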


\begin{proof}



By Lemma~\ref{constant}, the scalar torsion $||\mathcal{L}_{X} g ||:=2\lambda$ is constant on $M$. We furthermore have $\lambda>0$, since $||\mathcal{L}_{X} g ||\equiv 0$ would imply $g( \nabla_{e_s} X ,e_s )=0$ for $e_s\in E^s$ along any periodic orbit. Therefore, the eigenvalue of return map in $E^s$-direction would be equal to $\pm 1$, which is in contradiction with uniform contraction of $||e_s||$ implied by Anosovity of $X$  (see Proposition~\ref{anosovprop}~(2)). 

Therefore, $X$ is an Anosov contact flow with constant expansion and contraction rates. The following rigidity result in Anosov dynamics then finishes the proof. This was first shown by Green \cite{green} and then, Foulon \cite{entropy} reproved and reinterpreted it as an entropy rigidity phenomena (also see \cite{circle} or \cite{hoz4} for other geometric treatments).

\begin{theorem}(Green 1978 \cite{green}, Foulon 2001 \cite{entropy})
Suppose $X$ generates a contact Anosov flow with constant expansion rates. Then, $X$ is induced from a $SL_2$-triple. More precisely, there are smooth vector fields $e_1,e_2$ such that
$$[e_1,e_2]=2X, \ \ \ \ [e_1,X]=-\lambda e_2, \ \ \ \ [X,e_2]=\lambda e_1,$$
for some constant $\lambda>0$.
\end{theorem}

Note that $E^s:=Span(e_1+e_2,X)$ and $E^u:=Span(e_2-e_1,X)$ are the invariant weak bundles for $X$ and in fact $e_1,e_2$ are the eigenvectors of $h$ for the critical contact metric (see Lemma~\ref{cpa}).

This concludes the proof of Theorem~\ref{cca}, since the existence of such triple means that $X$ generates an algebraic Anosov contact flow (and hence, it is virtually smoothly equivalent to the geodesic flow of a surface of constant negative
curvature. See Section~\ref{s4}). 

\end{proof}




Putting previous results together, we have a complete classification of contact manifolds admitting critical contact metrics, answering the Chern-Hamilton question (Question~\ref{ch}). Recall that by the mentioned work of Deng \cite{deng}, i.e. Theorem~\ref{dength}, these critical contact metrics are in fact the minimizers of the Dirichlet energy. Hence, we have proved the following theorem, which answers Question~\ref{ch}.

\begin{theorem}\label{class}
Let $(M,\alpha)$ be a contact 3-manifold such that the Dirichlet energy functional admits a minimizer. Then,

(i) $(M,\alpha)$ can be approximated by generalized Boothby-Wang fibrations (the case $||\tau||\equiv 0$);

\noindent or

(ii) $(M,\alpha)$ is smoothly strictly contactomorphic to an algebraic Anosov contact manifold (the case $||\tau||\equiv C>0$).

\noindent Conversely, for the contact manifolds listed above the Dirichlet functional admits a minimizer.
\end{theorem}


\section{Anosovity of contact metrics and energy optimization\\ as asymptotic synchronization}\label{s6}

The goal of this section is to show that the Dirichlet energy functional, at least in the Anosov case, has close relation to Reeb dynamics, even in the absence of critical metrics discussed in Section~\ref{s5}. 

We begin by computing the scalar torsion of a (possibly non-critical) contact metric $g$ adapted to a (possibly non-algebraic) Anosov contact manifold $(M,\alpha)$. Given such contact metric, let $e_s \in E^{ss}$ and $e_u \in E^{uu}$ be the unit vectors and $0<\theta<\pi$ denote the angle between the stable and unstable directions. In particular, we have $e_u=\cos{\theta} e_s +\sin{\theta}\phi e_s$, or $\phi e_s=\csc{\theta} e_u -\cot{\theta} e_s$.  Note that $g(e_s,e_u)=\cos{\theta}$ and $d\alpha (e_s,e_u)=-\sin{\theta}$. 

Notice that fixing the norm on $E^{uu}$ and the angle function $\theta$ would determine the norm on $E^{ss}$ as well (the unit vector in $E^{ss}$ direction is the one satisfying $d\alpha(e_s,e_u)=-\sin{\theta}$) and hence the entire contact metric tensor would be determined. In other words, if we fix the norm $||.||_{E^{uu}}$, the entire metric tensor is determined by choosing the line bundle $L\subset \xi$ containing $\phi e_u$, as $\phi e_u$ would be the unique vector in $L$ satisfying $d\alpha(e_u,\phi e_u)=-2$.

This is important in what follows, as we will deform the contact metric through changing the norm on $E^{uu}$ and the angle function $\theta$. The following lemma describes the scalar torsion in terms of these choices, which will be useful in our strategy for optimizing the Dirichlet energy. Not surprisingly, the angle $\theta$ also determines a relation between the expansion rates $r_s$ and $r_u$. More specifically, we have $d\alpha(e_s,e_u)=-\sin{\theta}$ and taking the Lie derivative in the direction of $X$, we get:
$$ r_s(-\sin{\theta})+r_u(-\sin{\theta})=d\alpha([X,e_s],e_u)+d\alpha(e_s,[X,e_u])=-\cos{\theta}(X\cdot \theta),$$
which implies 
$$r_s+r_u=\cot{\theta}(X\cdot \theta).$$

\begin{lemma}\label{sc}
Using the above notation, the scalar torsion can be computed by $$||\mathcal{L}_{X_\alpha} g ||^2 =4r_u^2+[X\cdot \cot{\theta}-2r_u\cot{\theta}]^2.$$
\end{lemma}


\begin{proof}

We have
$$2h(e_s)=[X,\phi e_s]-\phi[X,e_s]=[X,\csc{\theta} e_u -\cot{\theta} e_s]-\phi(-r_s e_s)$$
$$=\big( X\cdot \csc{\theta} -r_u\csc{\theta}+r_s \csc{\theta} \big)e_u+\big( -X\cdot \cot{\theta}\big)e_s.$$


Also note that $\phi e_u=\cos{\theta}(\csc{\theta}e_u-\cot{\theta}e_s)-\sin{\theta}e_s=\cot{\theta}e_u-\csc{\theta}e_s$ and compute
$$2h(e_u)=[X,\phi e_u]-\phi[X,e_u]=[X,\cot{\theta}e_u-\csc{\theta}e_s]-\phi(-r_u e_u)$$
$$\big( -X\cdot \csc{\theta}+r_s \csc{\theta} -r_u\csc{\theta} \big)e_s+\big( X\cdot \cot{\theta}\big)e_u.$$


Now, in $(e_s,e_u)$ basis, we can write
$$2h=
\begin{bmatrix}
 -X\cdot \cot{\theta} \ \ \ \ \ \ \ -X\cdot \csc{\theta} +r_s \csc{\theta} -r_u\csc{\theta} \\
X\cdot \csc{\theta} +r_s \csc{\theta}-r_u\csc{\theta} \ \ \ \ \ \ \ \ \ \ X\cdot \cot{\theta}
 \end{bmatrix}_{(e_s,e_u)}.$$
 
 This means that if we let $\lambda$ be the positive eigenvalue of $h$, we have
 $$4\lambda^2=-Det(2h)=(X\cdot \cot{\theta})^2-(X\cdot \csc{\theta} )^2+(r_u-r_s)^2\csc^2{\theta}$$
 $$=(X\cdot \theta)^2 \big( \csc^4{\theta}-\csc^2{\theta}\cot^2{\theta}\big)+(r_u-r_s)^2\csc^2{\theta}=\csc^2{\theta}\big\{(X\cdot \theta)^2+(r_u-r_s)^2 \big\}$$
 $$=\csc^2{\theta}\big\{(X\cdot \theta)^2+(2r_u-\cot{\theta}(X\cdot \theta))^2 \big\}=\csc^2{\theta}\big\{(X\cdot \theta)^2+\cot^2{\theta}(X\cdot \theta)^2-4r_u\cot{\theta}(X\cdot \theta) +4r_u^2\big\}$$
 $$=\csc^2{\theta}\big\{(\csc^2{\theta}(X\cdot \theta)^2-4r_u\cos{\theta}\csc{\theta}(X\cdot \theta) +4r_u^2\cos^2{\theta}+4r_u^2\sin^2{\theta}\big\}$$
 $$=\csc^2{\theta}\big\{(\csc{\theta}(X\cdot \theta)-2r_u\cos{\theta})^2+4r_u^2\sin^2{\theta}\big\}=4r_u^2+\csc^2{\theta}(\csc{\theta}(X\cdot \theta)-2r_u\cos{\theta})^2$$
$$ =4r_u^2+[X\cdot \cot{\theta}-2r_u\cot{\theta}]^2,$$

finishing the proof of the lemma.
\end{proof}

We use the characterization of the scalar torsion above to prove the main theorem of this section.

\begin{theorem}\label{inf}
Let $(M,\alpha)$ be an Anosov contact 3-manifold. Then,
$$\inf_{g \in \mathcal{M}(\alpha) } \mathcal{E}(g)=\frac{\mathtt{h}^2_{\alpha \wedge d\alpha}(X_\alpha)}{Vol(\alpha \wedge d\alpha)},$$
where $\mathtt{h}_{\alpha \wedge d\alpha}(X_\alpha)$ is the measure entropy of the invariant measure $\alpha \wedge d\alpha$ under the Reeb flow generated by $X_\alpha$. Such infimum is achieved exactly when $(M,\alpha)$ is an algebraic Anosov contact manifold.
\end{theorem}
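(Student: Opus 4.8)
The plan is to pivot everything on the pointwise identity established immediately above, namely
$$4\lambda^2 = 4r_u^2 + \big[\,X\cdot\cot\theta - 2r_u\cot\theta\,\big]^2,$$
which yields the clean pointwise bound $\lambda^2 \ge r_u^2$, with equality precisely where the bracketed cross term vanishes. Since $\mathcal{E}(g) = \int_M \lambda^2\,\alpha\wedge d\alpha$, this reduces the whole question to analyzing $\int_M r_u^2\,\alpha\wedge d\alpha$ together with the geometric problem of when the cross term $X\cdot\cot\theta - 2r_u\cot\theta$ can be made to vanish.

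For the lower bound, which must hold for \emph{every} adapted contact metric $g$, I would integrate $\lambda^2\ge r_u^2$ against the invariant volume and then apply Cauchy--Schwarz:
$$\mathcal{E}(g) = \int_M \lambda^2\,\alpha\wedge d\alpha \ \ge\ \int_M r_u^2\,\alpha\wedge d\alpha \ \ge\ \frac{1}{Vol(\alpha\wedge d\alpha)}\Big(\int_M r_u\,\alpha\wedge d\alpha\Big)^2.$$
By Pesin's entropy formula the integral $\int_M r_u\,\alpha\wedge d\alpha$ equals the measure entropy $\mathtt{h}_{\alpha\wedge d\alpha}(X_\alpha)$, and it is independent of the chosen norm since $\int_M X\cdot f\,\alpha\wedge d\alpha = 0$ by invariance (cf. Proposition~\ref{anosovprop}(3)). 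This gives $\mathcal{E}(g)\ge \mathtt{h}^2_{\alpha\wedge d\alpha}(X_\alpha)/Vol(\alpha\wedge d\alpha)$ for all $g$, hence the same bound for the infimum.

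For the matching upper bound I would build an explicit minimizing sequence. The expansion uniformization theorem (Theorem~\ref{introexuni}) supplies, for each $\epsilon_i\to 0$, a norm on $E^{uu}$ whose expansion rate $r_u^{(i)}$ satisfies $|r_u^{(i)} - \bar r_u| < \epsilon_i$ on the full-measure set $C^u$, where by Corollary~\ref{pesincor} the Lyapunov exponent $\bar r_u$ equals the constant $\mathtt{h}_{\alpha\wedge d\alpha}(X_\alpha)/Vol(\alpha\wedge d\alpha)$ on the regular set. Using this norm on $E^{uu}$ and choosing the defining line $L = E^{ss}$ (equivalently $\theta\equiv\pi/2$, so the stable and unstable bundles become $g_i$-orthogonal) determines a contact metric $g_i$ for which the cross term vanishes identically; then $\lambda^2 = (r_u^{(i)})^2$ and $\mathcal{E}(g_i) = \int_M (r_u^{(i)})^2\,\alpha\wedge d\alpha$. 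Because $r_u^{(i)}$ converges to the constant $\mathtt{h}/Vol$ uniformly on the full-measure regular set and stays uniformly bounded there, bounded convergence forces $\mathcal{E}(g_i)\to \mathtt{h}^2_{\alpha\wedge d\alpha}(X_\alpha)/Vol(\alpha\wedge d\alpha)$, pinning down the infimum.

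Finally, for the statement that the infimum is attained exactly in the algebraic case: if some $g$ realizes it, then $g$ is a global minimizer of $\mathcal{E}$, so by Deng's theorem it is a critical contact metric, and then Theorem~\ref{cca} forces $(M,\alpha)$ to be an algebraic Anosov contact manifold; conversely, such a manifold carries a critical contact metric with $r_u$ a positive constant and $\theta\equiv\pi/2$, for which both inequalities above are equalities and $\mathcal{E} = \mathtt{h}^2/Vol$ exactly. The main obstacle in the whole scheme is the upper bound, and specifically its reliance on the expansion uniformization theorem: driving $r_u$ uniformly (in measure) toward its constant Lyapunov exponent while still producing a genuine \emph{smooth} adapted contact metric is exactly the delicate averaging/synchronization step deferred to the Appendix. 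Once that is in hand, the compatibility of the choice $\theta\equiv\pi/2$ with the prescribed $E^{uu}$-norm (which constrains only $r_u$) and the measure-theoretic passage to the limit are routine.
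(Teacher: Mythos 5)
Your overall strategy coincides with the paper's: the same pointwise identity $4\lambda^2=4r_u^2+[X\cdot\cot\theta-2r_u\cot\theta]^2$ drives both bounds, the lower bound is Jensen/Cauchy--Schwarz plus Pesin's formula (with the correct observation that $\int_M r_u\,\alpha\wedge d\alpha$ is norm-independent because a change of norm alters $r_u$ by an exact derivative $X\cdot f$), the upper bound is the expansion uniformization theorem combined with pushing $\theta$ to $\pi/2$, and the equality case goes through Deng's theorem and Theorem~\ref{cca}. This is essentially the paper's proof.

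There is, however, one genuine gap in your construction of the minimizing sequence. You take $L=E^{ss}$ exactly, i.e.\ $\theta\equiv\pi/2$, and you feed in the averaged norm from the Appendix directly. But for a contact Anosov flow the bundles $E^{ss}$ and $E^{uu}$ are only $C^1$ --- and by Ghys's rigidity theorem they are \emph{never} $C^2$ outside the algebraic case --- and the averaged norm $\|\cdot\|_T$ inherits at best that regularity transversally to the flow. So the metric $g_i$ you write down is only $C^1$ and does not lie in $\mathcal{M}(\alpha)$; worse, if it did admit a smooth representative with $\theta\equiv\pi/2$ and $r_u$ exactly constant, the flow would already be algebraic. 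The paper's fix is to \emph{approximate}: one takes a smooth $1$-form $\tilde\alpha_u$ that is $C^1$-close to $\alpha_u$ (with $\ker\alpha_u=E^s$) together with control on $\mathcal{L}_X\tilde\alpha_u$ (via the approximation results of \cite{hoz3}, \cite{massoni}), which yields a smooth line field $L$ with $\tilde\theta$ uniformly close to $\pi/2$ and $|X\cdot\tilde\theta|$ small; the cross term is then small rather than zero, which is all the estimate needs. You do flag a smoothness worry at the end, but you locate it inside the expansion uniformization theorem, which only controls the expansion rate and its derivative along the flow --- the smoothing of the line field $L$ (and of the norm) is a separate step that your argument as written omits. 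Your "bounded convergence" passage to the limit is fine and is if anything slightly cleaner than the paper's quantitative Jensen estimate.
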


\begin{proof}
As discussed above, a contact metric in our setting can be described in terms of a norm on $E^{uu}$ and an angle function $\theta:M\rightarrow \mathbb{R}$. We naturally find the infimum of the Dirichlet energy using Lemma~\ref{sc} and in two steps, since
$$\inf_{g \in \mathcal{M}(\alpha) } \mathcal{E}(g)=\inf_{||.||_{E^{uu}}}\inf_{\theta} \mathcal{E}(g)=\inf_{||.||_{E^{uu}}}\inf_{\theta} \int_M \lambda^2 \ \alpha\wedge d \alpha.$$

{\em Step 1.} Lemma~\ref{sc} implies that fixing the norm on $E^{uu}$, we have
$$\inf_{\theta} \mathcal{E}(g)=\inf_{\theta} \int_M \lambda^2 \ \alpha\wedge d \alpha=\int_M r_u^2 \ \alpha\wedge d \alpha.$$
To see this, note that for a fixed $g|_{E^{uu}}$ (and therefore a fixed $r_u$), $\lambda^2$ is minimized when $\theta\equiv \frac{\pi}{2}$, which is equivalent to having $\phi e_u=-e_s$. However, the regularity of such $\phi$ and the resulting contact metric would be as much as the regularity of $E^{ss}$ and $E^{uu}$, i.e. such metric would be only $C^1$ for a general Anosov contact manifold (see the discussion after Definition~\ref{anosov}). To remedy this, such desired angle function can be approximated appropriately by ones induced from smooth choices of almost complex structures (and therefore, smooth choices of the contact metrics). 
 More precisely, if $\epsilon>0$ is arbitrary and $\alpha_u$ is a $C^1$ 1-form with $\ker{\alpha_u}=E^s$, as shown in \cite{hoz3} and reiterated in \cite{massoni}, there is a $C^1$-approximation of $\alpha_u$ with a smooth 1-form $\tilde{\alpha}_u$ such that $||\alpha_u-\tilde{\alpha}_u||_{C^1}<\epsilon$ and $||\mathcal{L}_X\alpha_u-\mathcal{L}_X\tilde{\alpha}_u||_{C^1}<\epsilon$. Now we can choose the smooth line bundle $L:=\xi \cap \ker{\tilde{\alpha}_u}$ to construct an almost complex structure $\tilde{\phi}$ as above (by defining $\tilde{\phi} e_u$ as the unique vector in $L$ satisfying $d\alpha(e_u,\phi e_u)=-2$). This means that if $\tilde{\theta}$ is the angle function corresponding to $\tilde{\phi}$, it can be taken to be uniformly close to $\frac{\pi}{2}$ with $|X\cdot \tilde{\theta}|$ being arbitrary small.


{\em Step 2.} Using Jensen's inequality, we have a lower bound for 
$\mathcal{E}(g)$ as
$$\mathcal{E}(g)=\int_M r_u^2 \ \alpha \wedge d\alpha \geq Vol(\alpha \wedge d\alpha)(\int_M r_u \ \frac{\alpha \wedge d\alpha}{Vol(\alpha \wedge d\alpha)})^2=\frac{\mathtt{h}^2_{\alpha \wedge d\alpha}(X_\alpha)}{Vol(\alpha \wedge d\alpha)}=V\bar{\mathtt{h}}^2$$
where $\bar{\mathtt{h}}:=\frac{\mathtt{h}_{\alpha\wedge d\alpha}}{Vol(\alpha\wedge d\alpha)}$ is the {\em Liouville entropy}, $V:=Vol(\alpha\wedge d\alpha)$ is the contact volume and we have used the Pesin's entropy formula (see Section~\ref{s3}). To show that such minimum is in fact the infimum, we need Birkhoff's ergodic theorem and the {\em asymptotic synchronization process} of \cite{hoz7}.

As noted in Corollary~\ref{pesincor}, applying the Birkhoff's ergodic theorem to Pesin's formula implies that for {\em Lyapunov-regular} points of $M$, i.e. almost everywhere, we have
$$\lim_{T\rightarrow \infty}\frac{1}{T} \int_0^T r_u(X_\alpha^t(x))\ dt=\frac{\mathtt{h}_{\alpha \wedge d\alpha}(X_\alpha)}{Vol(\alpha \wedge d\alpha)}=\bar{\mathtt{h}},$$
implying that for any $x\in \mathcal{R}$, the Lyapunov exponent of $x$ equals the Liouville entropy $\frac{\mathtt{h}_{\alpha \wedge d\alpha}}{Vol(\alpha \wedge d\alpha)}$.

On the other hand, Theorem~4.1 in \cite{hoz7}, implies that given any $\epsilon >0$, the norm on $E^u$ can be defined such that 
$$|r_u-\lim_{T\rightarrow \infty}\frac{1}{T} \int_0^T r_u(X_\alpha^t(x))\ dt|<\epsilon,$$
whenever such limit exists (notice that such limit is independent of the norm defined on $E^u$).

Finally, note that the equality in Jensen's inequality holds, if and only if, the function is constant almost everywhere. Moreover, the error term of the inequality is controlled by the integral of the deviation from being constant (see \cite{jensen}). More precisely, for some Lipschitz constant $K>0$, we have

$$ 0\leq \int_M r_u^2 \ \alpha \wedge d\alpha - \frac{1}{V}(\int_M r_u \ \alpha \wedge d\alpha)^2 \leq K|| r_u - \frac{\int_M r_u \ \alpha \wedge d\alpha}{V} ||_{L^1}<KV\epsilon,$$
where the last inequality follows from the Birkhoff's ergodic theorem. The fact that $\epsilon>0$ can be taken to be arbitrary small finishes the proof for the infimum formula.

On the other hand, if an Anosov contact metric attains such an infimum, it would be a critical contact metric which is Anosov. Therefore, by Theorem~\ref{cca}, the underlying contact manifold is an algebraic Anosov contact manifold in this case.

\end{proof}

\section{Curvature of Anosov contact metrics}\label{s7}

Naturally, the computations of Section~\ref{s6} can be interpreted in terms of the curvature of an Anosov contact metric and we discuss relevant results in this section. More specifically, the convergence of Dirichlet energy to the infimum value in Theorem~\ref{inf} can be refined, using the control that the asymptotic synchronization process (see Theorem~4.1 of \cite{hoz7}) yields over certain quantities related to the second variations of the underlying geometry. 

As pointed out in the introduction, curvature realization problems for contact metrics have been long studied in the literature of contact metric geometry. In particular, the curvature values related to the Reeb vector field $X_\alpha$ are known to be important in the topic. Such results naturally focuses on computing $Ricc(X_\alpha)$, i.e. {\em Ricci-Reeb realization problem}, or the sectional curvature of planes containing $X_\alpha$, also called {\em $\alpha$-sectional curvatures}. As we mentioned in Theorem~\ref{curcm}, we have $Ricci(X_\alpha)=2-2\lambda^2$ for a general contact metric and therefore, we can directly interpret the computations of previous section as Ricci curvature computations. In particular, we have the following global realization obstruction, which is the first of this kind in the literature (see \cite{hoz2} for discussion on global obstructions to Ricci-Reeb realization).

\begin{corollary}\label{ricciglobal}
If $(M,\alpha)$ is an Anosov contact manifold, we have
$$\int_M Ricci(X_\alpha) \ \alpha \wedge d\alpha \leq 2- \frac{2\mathtt{h}^2_{\alpha \wedge d\alpha}(X_\alpha)}{Vol(\alpha \wedge d\alpha)}=2-2V\bar{\mathtt{h}}^2.$$
\end{corollary}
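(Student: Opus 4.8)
The plan is to read the corollary directly off the pointwise Ricci formula of Theorem~\ref{curcm} together with the infimum computation of Theorem~\ref{inf}; no new analytic input is required beyond these two results. First I would recall that for \emph{any} adapted contact metric $g\in\mathcal{M}(\alpha)$ one has the pointwise identity $Ricci(X_\alpha)=2-2\lambda^2$ from Theorem~\ref{curcm}, where $\lambda=||h||$ is the scalar quantity used throughout Section~\ref{s2}. Integrating this against the invariant volume form and using the identity $\mathcal{E}(g)=\int_M \lambda^2\,\alpha\wedge d\alpha$ recorded in Section~\ref{s22} yields the exact relation
$$\int_M Ricci(X_\alpha)\,\alpha\wedge d\alpha = 2\,Vol(\alpha\wedge d\alpha)-2\int_M \lambda^2\,\alpha\wedge d\alpha = 2V-2\mathcal{E}(g).$$
This converts the desired curvature bound into an equivalent lower bound on the Dirichlet energy.

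Next I would invoke Theorem~\ref{inf}, which gives $\mathcal{E}(g)\geq \inf_{g'\in\mathcal{M}(\alpha)}\mathcal{E}(g')=V\bar{\mathtt{h}}^2=\frac{\mathtt{h}^2_{\alpha\wedge d\alpha}(X_\alpha)}{Vol(\alpha\wedge d\alpha)}$ for every adapted contact metric. Substituting this into the displayed equality immediately produces
$$\int_M Ricci(X_\alpha)\,\alpha\wedge d\alpha = 2V-2\mathcal{E}(g)\leq 2V-2V\bar{\mathtt{h}}^2,$$
which, under the normalization $Vol(\alpha\wedge d\alpha)=1$ permitted by the scale-invariance of $\mathcal{E}$ recorded in Section~\ref{s22}, is exactly the asserted inequality $\int_M Ricci(X_\alpha)\,\alpha\wedge d\alpha\leq 2-2V\bar{\mathtt{h}}^2$. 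The equality characterization mentioned in the introductory version then follows from the equality clause of Theorem~\ref{inf}: equality forces $\mathcal{E}(g)=\inf_{g'}\mathcal{E}(g')$, which is attained precisely for algebraic Anosov contact manifolds.

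I should be candid that there is no substantive obstacle in the corollary itself: the entire analytic difficulty has already been absorbed into Theorem~\ref{inf}, whose proof combines Pesin's entropy formula, the Birkhoff ergodic theorem, and the expansion uniformization theorem of the Appendix. The only points requiring genuine care are bookkeeping ones, namely confirming the sign and the factor in $Ricci(X_\alpha)=2-2\lambda^2$ (so that the lower bound on $\mathcal{E}$ correctly flips into an \emph{upper} bound on the curvature integral) and tracking the volume factor $V=Vol(\alpha\wedge d\alpha)$, so that the displayed right-hand side $2-2V\bar{\mathtt{h}}^2$ is read under the probability-measure normalization.
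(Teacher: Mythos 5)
Your proposal is correct and is exactly the argument the paper intends: the identity $\mathcal{E}(g)=Vol(\alpha\wedge d\alpha)-\frac{1}{2}\int_M Ricci(X_\alpha)\,\alpha\wedge d\alpha$ recorded in Section~\ref{s22} turns the lower bound on $\mathcal{E}$ from Theorem~\ref{inf} into the stated upper bound on the curvature integral, and the equality clause is inherited directly from that theorem. Your observation that the right-hand side $2-2V\bar{\mathtt{h}}^2$ is to be read under the probability normalization $Vol(\alpha\wedge d\alpha)=1$ is also the correct reading of the paper's statement.
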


It should be noted that this does not give a local obstruction as for any contact manifold, the condition $Ricci(X_\alpha)=2$ (equivalently, $\mathcal{L}_{X_\alpha} g=0$) can be achieved locally. Naturally, as in Section~\ref{s6}, we can deform the contact metric, in order to make expansions asymptotically uniform and make the value of $Ricci(X_\alpha)$ arbitrary close to this bound almost everywhere. Before stating this result, we note that we can make similar {\em uniformization} of the $\alpha$-sectional curvatures as well. Perrone \cite{torsion} computes such sectional curvatures for a critical contact metric. More specifically, for a critical contact metric, he shows that
$$\kappa(e_1,X_\alpha)=1-\lambda^2+2\lambda\ \ \ \ \ \ \ \ \ \ \ \kappa(e_2,X_\alpha)=1-\lambda^2-2\lambda$$
$$\kappa(e_s=\frac{e_1+e_2}{\sqrt{2}},X_\alpha)=\kappa(e_u=\frac{e_1-e_2}{\sqrt{2}},X_\alpha)=1-\lambda^2,$$
where $e_1$ and $\phi e_2$ are the unit eigenvectors of $h$ as before.

 The asymptotic synchronization process yields similar formulas hold asymptotically for a general Anosov contact metric. However, one should note that $\alpha$-sectional curvatures, unlike $Ricci(X_\alpha)$, also depend on the derivative of expansion rates along the flow lines, which can be taken to be arbitrarily small by Theorem~4.1 in \cite{hoz7}. Moreover, in \cite{hoz2}, it is shown that for any unit vector $e\in \xi$, we can compute the $\alpha$-sectional corresponding to $e$ can be computed as
$$\kappa(e,X_\alpha)=g(\phi e,\nabla_e X_\alpha)^2-g(e,\nabla_e X_\alpha)^2-\frac{\partial}{\partial t} g(e(t),\nabla_{e(t)} X_\alpha)^2\big|_{t=0},$$
where $e(t)=\frac{X^t_*(e)}{||X^t_*(e)||}$. This in particular implies (see Proposition~\ref{cex})
$$\kappa(E^u):=\kappa(e_u,X_\alpha)=g(\phi e,\nabla_e X_\alpha)^2 -r_u^2-X\cdot r_u.$$

Theorem~4.1 of \cite{hoz7} not only implies that for an appropriate contact metric at any point, $r_u$ can be assumed to be close to the Lyapunov exponent $\bar{r}_u$ at that point (or almost everywhere close to the Liouville entropy $\bar{\mathtt{h}}$), but also we can assume $X\cdot r_u$ is arbitrary small. So, the only term we need to understand is $g(\phi e,\nabla_e X_\alpha)$. Note that by computations of the previous section we have
$$g(\phi e,\nabla_e X_\alpha)=g(\phi e,-\phi e_u,-\phi e_u-\phi h(e_u))=-1-g(e_u,h(e_u))$$
$$=-1-g(e_u,\frac{-X\cdot \csc{\theta}+(r_s-r_u) \csc{\theta}}{2}e_s+\frac{X\cdot \cot{\theta}}{2}e_u)$$
$$=-1+\frac{X\cdot \csc{\theta}+(r_u-r_s) \csc{\theta}}{2}\cos{\theta}-\frac{X\cdot \cot{\theta}}{2}$$
and therefore, similar to Theorem~\ref{inf}, by only taking the angle function $\theta$ arbitrary close to $\frac{\pi}{2}$ with $X\cdot \theta$ arbitrary small, we can assume $g(\phi e,\nabla_e X_\alpha)$ is arbitrary close to $-1$. This means that at any point $\kappa(E^u)$ can be taken to be arbitrary close to $1-\bar{r}_u^2$ (assuming such exponent exists at the point). Similar computations hold for $\kappa(E^s)$ and using the convergence in the Birkhoff ergodic theorem as in Section~\ref{s6}, we obtain the following {\em asymptotic} curvature realization. In fact, for any plane field containing $X_\alpha$, the sectional curvature converges almost everywhere to the one determined under the assumption of critical metric by Perrone \cite{torsion}. Naturally, the global convergence is in general only as good as the one given the ergodic theorem, i.e. in {\em Lebegue measure}. Note that the convergence on periodic orbits is uniform, thanks to their compactness and the fact the Lyapunov exponents for periodic points are well defined and determined by the eigenvalues of their return maps (see Proposition~\ref{anosovprop}).

\begin{theorem}\label{pinch}(Asymptotic curvature pinching for Anosov contact metrics)
Suppose $(M,\alpha)$ is an Anosov contact 3-manifold, $\bar{\mathtt{h}}:=\frac{\mathtt{h}_{\alpha \wedge d\alpha}(X_\alpha)}{Vol(\alpha \wedge d\alpha)}$ is the Liouville entropy of such flow, $V=Vol(\alpha \wedge d\alpha)$ and $\epsilon>0$. There exists a sequence of contact metrics $\{g_i \}_{i\in \mathbb{N}}$ adapted to $(M,\alpha)$, such that their Ricci and sectional curvature operators $Ricci_i$ and $\kappa_i$ satisfy the following:

(1) the sequence of smooth functions $\{Ricci_i(X_\alpha)\}_{i\in\mathbb{N}}$ converge in Lebegue measure to the constant function $R\equiv 2-2\bar{\mathtt{h}}^2$,

(2) both sequences of smooth functions $\{ \kappa_i(E^s)\}_{i\in\mathbb{N}}$ and $\{ \kappa_i(E^u)\}_{i\in\mathbb{N}}$ converge in Lebegue measure to the constant function $\kappa \equiv 1-\bar{\mathtt{h}}^2$, where $\kappa_i(E^u)$ and $\kappa_i(E^s)$ are the sectional curvature functions corresponding to the invariant bundles $E^u$ and $E^s$, respectively,

(3) if $\gamma$ is a periodic orbit of $X_\alpha$ with period $T$ and the eigenvalues of its return map corresponding to $E^u$ and $E^s$ being $\lambda_u$ and $\lambda_s$, respectively, then $\{Ricci_i(X_\alpha)\}_{i\in\mathbb{N}}|_\gamma$ converges uniformly to $2-2(\frac{\ln{|\lambda_u}|}{T})^2=2-2(\frac{\ln{|\lambda_s|}}{T})^2$ and similarly, both $\kappa_i(E^u)|_\gamma$ and $\kappa_i(E^s)|_\gamma$ converge uniformly to $1-(\frac{\ln{|\lambda_u}|}{T})^2=1-(\frac{\ln{|\lambda_s|}}{T})^2$.
\end{theorem}

In the case of algebraic Anosov contact manifolds, we can avoid appealing to ergodic theorem, since the invariant bundles are smooth and for some contact metric we have $r_u=-r_s=\frac{\mathtt{h}_{\alpha\wedge d\alpha}}{Vol(\alpha \wedge d\alpha)}=\bar{\mathtt{h}}$. Therefore, we can characterize all function which can be realized as $Ricci(X_\alpha)$ in terms of the Liouville entropy $\bar{\mathtt{h}}$. In particular, when $(M=UT\Sigma,\alpha)$ is the canonical contact manifold corresponding to the geodesic flow of a surface $\Sigma$ of constant negative curvature $K<0$, we have $\bar{\mathtt{h}}=\sqrt{-K}$ (see Section~\ref{s4}). Letting $\sigma:=\frac{\cot{\theta}}{2}$ and $\eta$ being the unique, up to a constant summation, function satisfying $r_u=\bar{\mathtt{h}}+X\cdot \eta$ (see Propostion~\ref{anosovprop}~(3)) we achieve the following solution to the Ricci-Reeb realization problem for algebraic Anosov contact manifolds.

\begin{theorem}\label{alg}(Ricci-Reeb realization formula for algebraic Anosov contact manifolds)
Let $(M,\alpha)$ be an algebraic Anosov contact manifold with Liouville entropy $\bar{\mathtt{h}}=\frac{\mathtt{h}_{\alpha \wedge d\alpha}}{Vol(\alpha\wedge d\alpha)}$. Then, for a smooth real function $f:M\rightarrow \mathbb{R}$, the followings are equivalent:

(1) For some adapted contact metric, we have $Ricci(X_\alpha)=f$ everywhere.

(2) for real functions $\eta,\sigma:M\rightarrow \mathbb{R}$ we have
$$f=2-2(\bar{\mathtt{h}}+X_\alpha\cdot \eta)^2-2[X_\alpha\cdot \sigma-2\sigma(\bar{\mathtt{h}}+X_\alpha\cdot \eta) ]^2.$$
\noindent In particular, if $(UT\Sigma,\alpha)$ is the canonical contact manifold corresponding to a surface of constant curvature $K<0$, a function $f$ can be realized as $Ricci(X_\alpha)$, if and only if, it can be written as
$$f=2-2(\sqrt{-K}+X_\alpha\cdot \eta)^2-2[X_\alpha\cdot \sigma-2\sigma(\sqrt{-K}+X_\alpha\cdot \eta) ]^2,$$
for some functions $\eta,\sigma:M\rightarrow \mathbb{R}$.
\end{theorem}





\Addresses
\end{document}